\newtheorem {pro}{Proposition}[subsection]
\newtheorem {thm}[pro]{Theorem}
\newtheorem {cor}[pro]{Corollary}
\newtheorem{lem}[pro]{Lemma}
\theoremstyle{definition}
 \newtheorem {rem}[pro]{Remark}
 \newtheorem {rems}[pro]{Remarks}
\newtheorem {dfn}[pro]{Definition}
\newcommand{\Q} {\mathbb{Q}}
\newcommand{\ko}{ k(0_+)}
\newcommand{\R} {\mathbb{R}}
\newcommand{\Z} {\mathbb{Z}}
\newcommand{\N} {\mathbb{N}}
\newcommand{\C} {\mathcal{C}}
\newcommand{\St}{\mathcal{S}}
\newcommand{\xb}{X_{reg}}
\newcommand{\D}{\mathcal{D}}
\newcommand{\E}{\mathcal{E}}
\newcommand{\Pp}{\mathbb{P}}
\newcommand{\la}{\mathcal{L}}
\newcommand{\eqr}{\sim}
\newcommand{\ep}{\varepsilon}
\title {{\bf   $L^\infty$ cohomology is intersection cohomology}}
\author{Guillaume Valette}
\address{Fields Institute 222 College Street,  Toronto, Ontario  M5T 3J1, Canada}
\address{
Instytut Matematyczny PAN, ul. Sw. Tomasza 30, 31-027 Krak\'ow, Poland }
\email{gvalette@impan.pl}
\keywords{differential  forms; de Rham cohomology; subanalytic sets; singular sets; intersection homology}
\thanks{}
\subjclass{14F40, 32C30, 32S35 }
\begin{document}
\maketitle

\begin{abstract}
Let $X$ be a subanalytic compact pseudomanifold. We show a de
Rham theorem for $L^\infty$ forms on the nonsingular part of $X$. We prove that their cohomology  is isomorphic to the intersection cohomology of $X$ in the
maximal perversity.
\end{abstract}

\section{Introduction}During the three last decades, many authors studied  $L^p$ differential forms on singular varieties.
The history started with J.  Cheeger  who computed the cohomology of $L^2$
forms on pseudomanifolds with metrically conical singularities \cite{c1}.
 He
proved in \cite{c2} that the $L^2$ cohomology is actually isomorphic (for pseudomanifolds with metrically conical singularities) to   intersection homology in the middle perversity (see also \cite{cgm}).

 Intersection homology  was introduced independently by M. Goresky and R. MacPherson in \cite{gm} in order  to study the topology of singular sets. Its main feature is to satisfy  Poincar\'e duality for a large class of
singularities, sufficiently  general to  enclose all the complex projective analytic  varieties (see \cite{gm, gm2}).

 Cheeger's de Rham theorem thus
provided a means to investigate the topology of singular sets via differential
geometry. It also enabled to carry out a Hodge theory on pseudomanifolds with metrically conical singularities, which was developed by Cheeger himself in a series of works \cite{c1,c2,c3,c4}.

 $L^p$ cohomology, $p\neq 2$, turned out to
be related to intersection homology as well.  Let us mention some
of the many related works which then  appeared.   In \cite{y}, Y.
Youssin computes the $L^p$ cohomology groups   of spaces with
conical horns. He shows that the $L^p$ cohomology groups are
isomorphic to intersection cohomology groups in the so-called
$L^p$ perversity $1<p<\infty$. He also describes quite explicitly
the case of $f$-horns. The so-called $f$-horns are
cones endowed with a metric decreasing at a rate proportional to a
function $f$ of the distance to the origin. L. Saper studies  in
\cite{s2} the $L^2$ cohomology  for sets with isolated
singularities with a distinguished  K\"ahler metric.

In \cite{hp}, the authors focus on normal algebraic complex
surfaces (not necessarily metrically conical).  They also show
that the $L^2$ cohomology is dual to intersection homology (see
also \cite{s1}).

 In
\cite{bgm}, the authors show, on a simplicial complex, an explicit
isomorphism between the $L^p$ shadow forms and intersection
homology. The shadow forms are smooth forms constructed by the
authors in a  combinatorial way, like Whitney forms \cite{wh}.

It is striking that, all the above mentioned de Rham theorems
include an assumption on the metric type of the singularities or
are devoted to low dimensional singular sets   whose metric type  is easier to handle.
In this paper, we focus on
$L^\infty$ forms, i.e., forms having a bounded size. We prove a de
Rham theorem for  any compact subanalytic pseudomanifold,
establishing an isomorphism between $L^\infty$ cohomology and
intersection homology in the maximal perversity (Theorem \ref{thm de rham}). We also prove
that the isomorphism is provided by integration on subanalytic
singular chains. The class of subanalytic pseudomanifolds covers a large class of subsets such as all the complex analytic projective varieties. Furthermore, the theory presented in this paper could go over singular subanalytic subsets which are not pseudomanifolds and we could adapt the statement to arbitrary subanalytic subsets.

This theorem, which applies to any compact subanalytic pseudomanifold, is
proved by looking in details at the metric structure of subanalytic
sets (see section $2$). The sharp description of the metric type of singularities
obtained \cite{v1,v2} will make it possible to work without any extra
assumption on the metric type
of the singularities.

As a consequence, we immediately see the $L^\infty$ groups are finitely generated.
The purpose is also, as in the case of $L^2$ cohomology, to
find a category of forms for which we can carry out a Hodge theory
 for any compact subanalytic singular  variety.   Performing analysis or differential geometry on singular spaces is much more challenging that on smooth manifolds because the metric geometry of singular sets is much harder to handle.



\subsection*{Acknowledgment.} The author is very happy to thank
Pierre Milman for his encouragements  and valuable discussions on
this topic.

\section{Definitions and the main result}
 This paper deals with  subanalytic sets. We recall their definition and outline their basic properties in an Appendix at the end of the paper (sections $5$ and $6$).

\subsection{$L^\infty$-cohomology groups.} Before stating the main result, we need to define the $L^\infty$ cohomology groups.

\begin{dfn} Let $Y $ be  a  $C^\infty$ submanifold of $\R^n$.
 As $Y$ is embedded in $\R^n$, it inherits  a natural structure of Riemannian manifold. We say that a $j$-differential form
 $\omega$ on $Y$ is $L^\infty$ if there exists a constant $C$ such that  for any $x \in Y$: $$|\omega(x)| \leq C, $$
where $|\omega(x)|$ denotes the norm of $\omega(x)$ (as a linear
mapping).  We will write  $d$ for the exterior differential operator.

 We denote by $\Omega^j_\infty (Y)$ the real  vector space
constituted by all the differential $C^\infty$ $j$-forms $\omega$ such that
$\omega $ and $d\omega$ are both $L^\infty$.

 Given $\omega\in \Omega_{\infty}^j(Y)$, we set $|\omega|_\infty:=\sup_{x \in Y}
|\omega(x)|$.

The cohomology groups of this cochain complex are called the {\bf
$L^\infty$ cohomology groups of $Y$} and will be denoted by
$H^\bullet _\infty(Y)$.
\end{dfn}

\subsection{Intersection homology in the maximal perversity and the main theorem.} We recall below the definition of intersection homology (see \cite{gm}). Intersection homology as defined in the latter article  depends on a
"perversity". The  definition below corresponds to the case of the maximal perversity $t=(0,1,\dots,l-2)$ (the letter $t$ stands for "top" perversity).

 As we will be interested in the only case of the maximal perversity, we  specify this particular case in the definition and shall not introduce the technical notion of perversity. But this  accounts for the notation $I^t C_j(X)$  (which is the notation of \cite{gm}) used below.


Given a subanalytic  set $X$, we denote by $X_{reg}$ the set of points of $X$
at which $X$ is locally a $C^\infty$ manifold (without boundary, of any dimension) and we will
write  $X_{sing}$ for the complement of $X_{reg}$ in $X$.
\medskip

{\bf Subanalytic singular simplices} are subanalytic continuous
maps $c: T_j \to X$, $T_j$ being the oriented $j$-simplex spanned by
$0,e_1,\dots,e_j$ where $e_1,\dots, e_j$ is the canonical basis of
$\R^j$.   Given a globally subanalytic set $X \subset \R^n$ we denote by $C_\bullet(X)$
the resulting chain complex (with coefficients in $\R$). We will
write $|c|$ for the support of a chain $c$ and by $\partial c$ the boundary of $c$.

\begin{dfn}\label{del pseudomanifolds}
An {\bf $l$-dimensional pseudomanifold} is a globally   subanalytic locally closed set
$X\subset \R^n$ such that
$X_{reg}$ is a manifold of dimension $l$ and $\dim X_{sing} \leq l-2$  (see Appendix I for the definition of the dimension).

A globally subanalytic subset  $Y \subset X$ is called  {\bf $(t;i)$-allowable} if
$\dim Y \cap X_{sing} <i-1$.  Define $I^{t}C_i (X)$ as the
subgroup of $C_i(X)$ consisting of those chains $\xi$ such that
$|\xi|$ is $(t, i)$-allowable and $|\partial \xi|$ is $(t, i -
1)$-allowable.

 The {\bf $j^{th}$ intersection cohomology group of maximal perversity}, denoted
$I^{t}H^j (X)$, is the $j^{th}$ cohomology group of the cochain
complex $I^tC^\bullet (X)=Hom(I^{t}C_\bullet(X);\R)$.
\end{dfn}


In this paper, we prove:

\begin{thm}\label{thm de rham}Let $X$ be a compact subanalytic pseudomanifold. For any $j$:
$$H_\infty ^j(X_{reg}) \simeq I^{t}H^j (X).$$
\end{thm}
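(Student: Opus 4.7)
As announced in the introduction, the candidate isomorphism is integration, so the strategy is to build an integration map and prove it is a quasi-isomorphism via a sheaf-theoretic reduction to a local model.

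\emph{Step 1: The integration cochain map.} I would define
$\Psi\colon\Omega^j_\infty(X_{reg})\to I^tC^j(X)$ by
$\Psi(\omega)(c)=\int_{c^{-1}(X_{reg})}c^*\omega$ for $c\in I^tC_j(X)$.
The $(t,j)$-allowability condition $\dim(|c|\cap X_{sing})<j-1$, together with subanalyticity of $c$, forces $c^{-1}(X_{sing})$ to have Lebesgue measure zero in $T_j$; the bound $|\omega|_\infty<\infty$ then makes the integral convergent. Next I would prove Stokes' formula $\int_c d\omega=\int_{\partial c}\omega$ by excising $\varepsilon$-tubular neighborhoods of $c^{-1}(X_{sing})$ and $(\partial c)^{-1}(X_{sing})$, applying classical Stokes on the remaining compact subanalytic domains with corners, and letting $\varepsilon\to 0$. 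The parasitic tube contributions are bounded by $|\omega|_\infty$ times the $(j-1)$-volume of the tubes; this volume tends to $0$ precisely because of the dimensional gap built into the $(t,j-1)$-allowability of $\partial c$.

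\emph{Step 2: Sheafification and Mayer--Vietoris reduction.} To promote $\Psi$ to an isomorphism in cohomology, I would work on the presheaves $U\mapsto \Omega^\bullet_\infty(U\cap X_{reg})$ and $U\mapsto I^tC^\bullet(U\cap X)$ on $X$, indexed by open subanalytic subsets $U$; the map $\Psi$ is natural for restriction. Both sides admit Mayer--Vietoris sequences for locally finite subanalytic open covers: for the chain side this is standard subdivision, and for the $L^\infty$ side one uses a subanalytic Lipschitz partition of unity (so that multiplication by cut-off functions preserves the $L^\infty$ condition on $d$). A standard induction on a subanalytic stratification of $X$, together with a cover by cone-like neighborhoods, reduces the theorem to showing that $\Psi$ is a quasi-isomorphism on each basic open set.

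\emph{Step 3: The local model --- main obstacle.} On basic opens in $X_{reg}$ the map $\Psi$ reduces to the classical smooth de Rham map on a Euclidean ball, for which boundedness is automatic on compact closures and the classical Poincar\'e lemma applies. The decisive and genuinely hard case is a basic neighborhood $U$ of a point $x\in X_{sing}$. Two ingredients are needed: first, an explicit computation of $I^tH^\bullet(U)$ for maximal perversity, which should vanish in positive degrees on such cone-like opens; second, and critically, a bounded Poincar\'e-type lemma saying that every closed $L^\infty$ form on $U\cap X_{reg}$ of positive degree admits an $L^\infty$ primitive. This is the true heart of the proof and the step where metric geometry enters, because $X$ is \emph{not} assumed to be metrically conical. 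The plan is to invoke the sharp description of the metric type of subanalytic singularities from \cite{v1,v2} to obtain a subanalytic bi-Lipschitz cone-like model of the germ $(X,x)$ with well-controlled links, and then construct the $L^\infty$ primitive by explicit radial integration in these cone coordinates, using bi-Lipschitz invariance of $\Omega^\bullet_\infty$ to transfer the estimate back to $X$. Once this local statement is in hand, the global isomorphism $\Psi_\ast$ follows formally from Step~2.
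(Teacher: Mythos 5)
Your Step 3 contains the genuine gap. You propose to reduce the local bounded Poincar\'e lemma to ``a subanalytic bi-Lipschitz cone-like model of the germ $(X,x)$'' followed by radial integration in cone coordinates. But no such model exists in general: a subanalytic germ need not be bi-Lipschitz homeomorphic to a metric cone (horns already fail this), and that is precisely the difficulty the theorem is designed to overcome. What the paper extracts from \cite{v1,v2} is both weaker and more delicate: a Lipschitz (not bi-Lipschitz) strong deformation retraction $r_t$ of a neighborhood onto the point, preserving $X_{reg}$, whose derivative $d_xr_t$ tends to $0$ as $t\to 0$ (Theorem \ref{thm_retraction}). The decay of the derivative is exactly what makes the radial homotopy operator $\alpha=\int_0^1\omega_2\,dt$ land in $L^\infty$ and satisfy $\overline{d}\alpha=\omega$ (the boundary term $\omega_1(\cdot;t)$ at $t=0$ dies because $d_xr_t\to 0$); constructing such an $r$ occupies all of Section $2$ and is the technical heart of the paper. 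A further point you do not address: $r$ is only subanalytic, hence merely almost everywhere smooth, so one must pass to weakly differentiable $L^\infty$ forms, show that this does not change the cohomology (Proposition \ref{pro l1 isom smooth}), and smooth $r$ by approximations that retain the decay of the derivative. Writing ``invoke the sharp description of the metric type'' does not substitute for this construction.

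Secondary remarks on your global scheme: the paper does not compare the two complexes by a Mayer--Vietoris induction. It first replaces $X$ by an $L^\infty$ normalization (which changes neither side), invokes Goresky--MacPherson to identify $I^{t}H^j$ with ordinary cohomology for normal pseudomanifolds, and then shows that the sheaves of locally bounded weakly differentiable forms give a fine torsionless resolution of the constant sheaf $\R_X$; normality is needed there to get $\overline{H}^0_\infty=\R$ locally. Your direct comparison would have to handle the degree-$0$ local computation at non-normal points and justify a Mayer--Vietoris sequence for $I^tC^\bullet$, neither of which is supplied. The integration map of your Step 1 is sound in substance (the paper establishes the subanalytic Stokes formula via \L ojasiewicz's theorem), but in the paper it serves only a posteriori to identify the isomorphism, not to produce it.
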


This theorem is proved in section $4$. This requires  to investigate in details the
metric type of subanalytic singular sets. This is accomplished  in section $2$.

We then briefly recall the notion
of normalization of pseudomanifolds in section $3$.

 We will also show that the
isomorphism is given by integration on simplices (section $4.3$).
Simplices are singular and lie in $X$ (whereas forms are only
defined on $X_{reg}$) but integration is well defined and gives
rise to a cochain map if the simplices are subanalytic (see section $4.3$
for details).

\bigskip

{\bf Notations and conventions.}  We denote by $B^n(x;\ep)$ the
ball of radius $\ep$ centered at $x \in \R^n$ while $S^{n-1}(x;\ep)$ will stand
for the corresponding sphere. The symbol $|.|$ will denote the Euclidean norm while $d(.,.)$ will stand for the Euclidean distance.

We denote by
$k(0_+)$ the field of Puiseux series $\sum_{i \geq m} a_i T^{\frac{i}{p}}$, $p\in \N$, $a_i \in \R$, $i,m \in \Z$, with $\sum_{i\in \N} a_i t^i$ convergent for $t$ in a  neighborhood of zero (see Appendix II). We can order this field by setting $f\leq g$ in $k(0_+)$ if  $f(t)\leq g(t)$ for $t$ positive real number in a neighborhood of  zero. We write $T$ for the indeterminate.  The motivation for considering this field is clarified in  Appendix II.

Let $R$ stand for either $\R$ or $k(0_+)$.  By {\bf Lipschitz function}, we
will mean a function $f: A \to R$,  $A\subset R^n$, satisfying for some integer $N$:
$$|f(x)-f(x')| \leq N|x-x'|,$$
for all  $x$ and $x'$
in $A$. It is important to notice that we require
the constant to be an integer for  $k(0_+)$ is not
archimedean (see again Appendix II). A map $h:A \to R^m$ is Lipschitz if so are  all its
components; a homeomorphism $h$ is {\bf bi-Lipschitz} if
$h$ and $h^{-1}$ are Lipschitz.

Given two functions  $f , g:A \to R$, we write $f \sim g$ (and
say that $f$ is {\bf equivalent} to $g$) if there exists a positive
integer  $C$ such that $\frac{f}{C} \leq g \leq Cf$.

Given a   function $\xi: A \to \R$, we denote by $\Gamma_\xi$ its
graph and by $\xi_{|B}$ its restriction to a subset $B$ of $A$. Given two functions $\zeta$ and $\xi$ on a set $A \subset
\R^n$ with $\xi \leq \zeta$, we define the {\bf closed interval} as the set:
$$[\xi;\zeta]:=\{(x;y)\in A \times \R: \xi(x)\leq y \leq \zeta(x)\}.$$
The open and semi-open intervals are then defined analogously.

Given $A \subset \R^n$, we respectively write $cl(A)$  and $Int(A)$ for the closure and interior of $A$ (with respect to the Euclidean topology). We also define the (topological) {\bf boundary} of $A$ by  $\delta A:= cl(A) \setminus Int (A)$.

{\bf Convention.}  All the sets and mappings considered in this paper will be
assumed to be globally subanalytic (if not otherwise specified), except the
differential forms.

For the convenience of the reader,  all the necessary definitions and basic facts of subanalytic geometry may be found in two Appendixes  at the end of the paper, where references of proofs are also provided.

\section{Lipschitz retractions}
This section provides some results about the metric geometry of globally subanalytic sets.
These results   will be very
important to compute the $L^\infty$ cohomology groups later on.
Given a germ of subanalytic set $X$ at $x_0$, we shall construct a  Lipschitz
strong deformation retraction  $r_t$, $t \in [0,1]$, $r_0\equiv x_0$, $r_1=Id$, of this germ onto $x_0$ (Theorem
\ref{thm_retraction}). It is the main result of this section.

  By way of motivation for all the results of this section, let us  briefly outline  the strategy of the proof  of Theorem \ref{thm_retraction}.  Let $X$ be the germ of a singular (subanalytic) set. Replacing $X$ by $\hat{X}$ (see (\ref{eq_X_hat}) for $\hat{X}$), we may estimate the distance  to the origin  by the first coordinate.
 We proceed by induction on $n$, if $X \subset \R^n$.   The result is therefore true for $\pi_n(X)$ if $\pi_n:\R^n \to \R^{n-1}$ is the canonical projection.   By Corollary \ref{lem existence proj reg preservant xn+1} and Lemma \ref{lem_graph_proj_reg}, up to a bi-Lipschitz homeomorphism preserving the first coordinate, we know that $\hat{X}$ may be included in the graphs of finitely many Lipschitz  functions. We thus can lift the retraction obtained by induction, making use of the estimates of Lemma \ref{prop proj reg} so as to establish its Lipschitz character.

  The techniques of this
section, especially Theorem \ref{thm_retraction}, can have other
applications (see \cite{sv}).
We start by recalling some results of \cite{v1,v2}.

Given  $n>1$ and a positive
constant $M$ we set:
$$\C_n(M):=\{(x_1;x') \in \R\times \R^{n-1}:  0 \leq |x'| \leq M x_1\,\}.$$
For $n=1$, we just set $\C_1(M)=\R$.

\subsection{Regular vectors.}  In the definition below,  $R$ stands for either
$k(0_+)$ or $\R$.

\begin{dfn}\label{boule  reguliere}
Let $X$ be a   subset of $R^{n}$. An element $\lambda$ of $S^{n-1} $
is said to be {\bf regular  for $X$} if there is a  positive
real number $\alpha$ such that:
$$d(\lambda;T_x X_{reg}) \geq \alpha,$$
for any $x$ in $X_{reg}$.
\end{dfn}

Recall that the order relation in $\ko$ was defined by comparing the series on a right-hand-side neighborhood of zero. Therefore, in the above definition, the inequality means in the case $R=\ko$ that for $x \in X_{reg}$ the limit at zero of the Puiseux series $d(\lambda;T_{x} X_{reg}) $  cannot be smaller than $\alpha>0$. It is important to notice that $\alpha$ is required to be a positive {\it real number} and not a Puiseux series: it implies that the Puiseux series $d(\lambda;T_{x} X_{reg}) $ may not tend to zero at zero.

Regular vectors do not always exist, as it  is shown by the simple
example of a  circle. Nevertheless, we can  get a regular vector
without affecting the metric type of a subanalytic set:

\begin{thm}\label{thm proj reg hom pres}\cite{v1}
Let $X$ be a subset of $k(0_+)^n$ of empty interior. Then there
exists a bi-Lipschitz homeomorphism $h : k(0_+)^n \rightarrow
k(0_+)^n$ such that $e_{n}$ is regular for $h(X)$.
\end{thm}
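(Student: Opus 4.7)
The plan is to argue by induction on the ambient dimension $n$, using a Lipschitz (``pancake'') cell decomposition of $X$ adapted to the non-archimedean field $k(0_+)$. The base case $n=1$ is vacuous: a subset of $k(0_+)$ with empty interior is a finite set of points, so $X_{reg}=\emptyset$ and every $\lambda\in S^0$ is trivially regular for it, in particular $e_1$ is.

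For the inductive step, I would first produce a finite decomposition of $k(0_+)^n$ into subanalytic Lipschitz cells compatible with $X$. Since $X$ has empty interior, $\dim X\leq n-1$, so every cell lying inside $X$ has dimension at most $n-1$. By the standard structural results on the metric type of subanalytic sets underlying the earlier references of the paper, each top-dimensional cell becomes, after a suitable orthogonal change of coordinates, the graph of a Lipschitz function $\varphi:U\subset k(0_+)^{n-1}\to k(0_+)$. For such a graph the last coordinate direction is immediately regular: the tangent space at $(x,\varphi(x))$ makes an angle with $e_n$ bounded below by a positive rational number depending only on (an integer upper bound for) the Lipschitz constant of $\varphi$.

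The main obstacle is that different cells of the decomposition may require different orthogonal transformations to be put into graph form, so no single direction is simultaneously regular for all of them. To remove this obstruction I would construct $h$ as a finite composition of bi-Lipschitz self-homeomorphisms of $k(0_+)^n$, each of which straightens one cell into a graph over $\{x_n=0\}$ while preserving, up to controlled distortion, the graph property already achieved on the previously straightened cells. Since there are finitely many cells and Lipschitz constants compose multiplicatively, the resulting composition is still bi-Lipschitz with integer constant, and in $h(X)$ every top-dimensional stratum is a graph over $\{x_n=0\}$ with uniformly bounded Lipschitz constant, hence $e_n$ is regular for $h(X)$.

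The hard step will be verifying that the iterative straightening can be performed coherently across the boundaries of the cell decomposition, so that $h$ is a \emph{global} bi-Lipschitz homeomorphism of $k(0_+)^n$ and the tangent spaces at every regular point of $h(X)$ are uniformly separated from $e_n$ by one common positive rational $\alpha$. This is where the non-archimedean nature of $k(0_+)$ plays a crucial role: it allows infinitesimal perturbations that decouple the straightening of one pancake from the next, and the restriction to integer Lipschitz constants (hence rational $\alpha$) is compatible with the finitely many inductive steps needed.
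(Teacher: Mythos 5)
First, a remark on the ground truth: the paper does not prove this theorem at all. It is imported verbatim from \cite{v1} (``whose proof may be found in \cite{v1}''), so there is no internal proof to compare against; what follows is an assessment of your argument on its own terms and against the known proof in \cite{v1}.

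Your skeleton --- decompose $X$ into finitely many Lipschitz ``pancakes,'' each a Lipschitz graph over a hyperplane after an orthogonal change of coordinates, and then build $h$ as a composition of bi-Lipschitz maps that straighten the pieces one at a time --- is indeed the strategy of the actual proof. But the proposal has a genuine gap: the entire content of the theorem is the step you defer to the end as ``the hard step.'' You never construct a single one of the straightening homeomorphisms, and this is not a routine verification. An orthogonal transformation straightens one pancake only by rotating all the others, so the required maps are genuinely nonlinear shear-type homeomorphisms (in \cite{v1} they are built from Lipschitz functions and distance functions to the pieces already processed), and the delicate point is precisely to show that such a map can be chosen (i) globally bi-Lipschitz on $k(0_+)^n$ and (ii) compatible with the regularity already achieved, with one uniform rational $\alpha$ at the end. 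Your appeal to ``the non-archimedean nature of $k(0_+)$'' providing ``infinitesimal perturbations that decouple the straightening of one pancake from the next'' is not an argument and does not reflect how the field $k(0_+)$ is actually used (in this paper it serves to encode germs and families via model completeness, as in the proof of Corollary \ref{lem existence proj reg preservant xn+1}, not to decouple pancakes). Two smaller defects: the induction on $n$ is announced but the inductive hypothesis in dimension $n-1$ is never invoked anywhere in the inductive step, so the induction does no work; and the claim that uniform graph representations over $\{x_n=0\}$ with bounded Lipschitz constants yield regularity of $e_n$ is correct but needs the bound to be uniform across all pieces \emph{after} all the compositions, which is exactly what the missing construction must deliver.
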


\medskip

For instance, if $X$ is the circle (in $\ko^2$) defined by $x^2+y^2=1$ then the provided bi-Lipschitz homeomorphism may send $X$ onto a triangle (in $\ko^2$). We see (intuitively at least)  that it is not possible to require $h(X)$ to  be a smooth manifold even if so is $X$. Such a mapping $h$ is the generic fiber (see  Appendix II, section $6$) of a family of homeomorphisms sending the cylinder $(0,\ep)\times C$, where $\ep$ is a positive real number and $C$ denotes the unit circle in $\R^2$, onto the product of a triangle with the interval $(0,\ep)$. The situation  gets more difficult when $X$ is singular since it may have many different limits of tangent spaces at a singular point.

\begin{dfn}
A  map $h:\R^n \to \R^n$  is {\bf $x_1$-preserving} if it
preserves the first coordinate in the canonical basis of $\R^n$.
\end{dfn}

It is  shown in \cite{v2} that, if the considered  subset  lies in
$\C_{n}(M)$, then the homeomorphism of Theorem \ref{thm proj reg
hom pres} may be chosen $x_1$-preserving. In \cite{v2}, the result
was for semialgebraic sets.  Below, we  prove it  in the subanalytic framework.

In the proof below, we consider subsets of $\R^n$ as families of
subsets of $\R^{n-1}$ parameterized by the first coordinate. Given $t\in \R$, we write $X_{t}$ for the set
of points of $X$ having their first coordinate equal to $t$.

\begin{cor}\label{lem existence proj reg preservant xn+1}
Let $X$ be the germ at $0$ of a subset of $\mathcal{C}_{n}(M)$ of
empty interior, $M>0$. There exists a germ of
$x_1$-preserving bi-Lipschitz homeomorphism (onto its image) $h:\C_{n}(M)\to
\C_n(M)$ such that $e_{n}$ is regular for $h(X)$.
\end{cor}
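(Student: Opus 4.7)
The plan is to reduce this corollary to an application of Theorem \ref{thm proj reg hom pres} one dimension lower, using the Hardy field formalism to treat the first coordinate $x_1$ as a small parameter (the indeterminate $T$). A germ of subanalytic subset $X\subset\C_n(M)$ at the origin is the same as a subanalytic family $\{X_{x_1}\}_{0<x_1\ll 1}$ of subsets of $\R^{n-1}$ with $X_{x_1}\subset B_{n-1}(0;Mx_1)$, and such a family corresponds, via substitution $x_1=T$, to a single subanalytic subset $\widetilde X\subset k(0_+)^{n-1}$ contained in $B_{n-1}(0;MT)$. Emptiness of interior is preserved in this translation, so $\widetilde X$ has empty interior.

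Next I would apply Theorem \ref{thm proj reg hom pres} to $\widetilde X$ in $k(0_+)^{n-1}$, obtaining a subanalytic bi-Lipschitz homeomorphism $\widetilde h:k(0_+)^{n-1}\to k(0_+)^{n-1}$ for which the last standard vector is regular for $\widetilde h(\widetilde X)$. Since $\widetilde h$ is definable in $k(0_+)$, it is exactly the Hardy-field avatar of a subanalytic family $\{h_{x_1}:\R^{n-1}\to\R^{n-1}\}_{0<x_1\ll 1}$, and I set
$$h(x_1,x'):=\bigl(x_1,\,h_{x_1}(x')-h_{x_1}(0)\bigr).$$
This is by construction an $x_1$-preserving subanalytic map. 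The subtraction of $h_{x_1}(0)$ keeps the vertex of the cone fixed without spoiling anything else, because translations do not affect tangent spaces and hence preserve regularity.

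Then I would check the remaining properties. The bi-Lipschitz character of $\widetilde h$ with an integer constant $N$ translates, slice by slice, to $h_{x_1}$ being bi-Lipschitz with constant $N$; combined with the $x_1$-preserving form of $h$ and the continuity of the family, this gives that $h$ itself is bi-Lipschitz on a germ of $\C_n(M)$. The same constant $N$ shows that $|h_{x_1}(x')-h_{x_1}(0)|\le N|x'|\le NMx_1$, so the image lies in $\C_n(M')$ with $M':=NM$. Finally, the regularity assertion for $\widetilde h(\widetilde X)$ in $k(0_+)^{n-1}$, which takes the form of a positive rational lower bound $\alpha$ on $\mathrm{dist}(e_{n-1},T_x\widetilde h(\widetilde X)_{reg})$, transfers to a uniform lower bound $\alpha$ on $\mathrm{dist}(e_n,T_{(x_1,y)}h(X)_{reg})$ valid for all $x_1$ in a right-hand neighborhood of $0$, which is exactly regularity of $e_n$ for $h(X)$.

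The main obstacle is the transfer step itself: making sure that the statements formulated over $k(0_+)$ (bi-Lipschitz with \emph{integer} constant, regular line with \emph{rational} $\alpha$) yield, on the real side, bounds that are uniform in $x_1$ near $0$. This is precisely the reason why the definitions in the ``Notations'' subsection require integer Lipschitz constants and the definition of regular line requires a rational bound: these are the conditions under which a Hardy-field statement about $k(0_+)^{n-1}$ descends to a germ-uniform statement about the subanalytic family over $\R$, and without them the corollary would not follow from Theorem \ref{thm proj reg hom pres} in the claimed automatic way.
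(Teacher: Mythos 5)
Your overall strategy is the same as the paper's: pass to the generic fiber over $k(0_+)$, apply Theorem \ref{thm proj reg hom pres} there, descend by model completeness to a family $h_{x_1}$, and translate so that the vertex is fixed. But the two points you dispatch in one sentence each are precisely the nontrivial content of the argument, and as written both are gaps.

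First, the regularity transfer is not what you claim it is. What descends from $k(0_+)^{n-1}$ is that $e_n$ is regular for each \emph{fiber} $h(X)_{x_1}$, i.e.\ a uniform lower bound on $\mathrm{dist}(e_n; T_y\, (h(X)_{x_1})_{reg})$, where these tangent spaces live in the hyperplane $\{x_1\}\times\R^{n-1}$. Regularity of $e_n$ for the germ $h(X)$ concerns the tangent spaces $T_q\, h(X)_{reg}$ of the total set, which have one extra dimension and can a priori tilt toward $e_n$ even while their slices stay away from it. This implication can fail without the cone hypothesis, and the paper has to argue it: taking an arc $\gamma$ along which $e_n \in \tau = \lim T_{\gamma(t)}X_{reg}$, fiberwise regularity forces $\tau$ to be non-transverse to $\langle e_1\rangle^\perp$, hence contained in it, hence $\gamma_1(t)/|\gamma(t)|\to 0$, contradicting $\gamma(t)\in\C_n(M)$. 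Second, ``bi-Lipschitz slice by slice with uniform constant, plus continuity of the family'' does not give that $h$ is Lipschitz in all variables: a continuously but non-Lipschitzly varying family of isometries already defeats this. Here too the cone structure is essential (fibers have diameter $O(x_1)$), and the paper runs a separate contradiction argument with two arcs $p(t), q(t)$, using $|p(t)-p(t')|\sim|t-t'|$ inside $\C_n(M)$ to reduce the cross-fiber estimate to the fiberwise one. You have correctly identified the skeleton, but both load-bearing verifications are asserted rather than proved.
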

\begin{proof}
Apply Theorem \ref{thm proj reg hom pres} to the generic fiber:
$$X_{0_+}:=\{x:(T;x)\in X_{k(0_+)}\},$$
where $X_{k(0_+)}$ denotes the extension of the set $X$ to $k(0_+)$ (see Appendix II).
This provides a bi-Lipschitz homeomorphism $H:\ko^{n-1}\to \ko^{n-1}$ which immediately gives rise (via the so-called transfer principle, see again Appendix II) to a $x_1$-preserving
bi-Lipschitz homeomorphism $h: (0;\ep) \times\R^{n-1} \rightarrow
(0;\ep)\times \R^{n-1}$, $(t,x) \mapsto (t,h_t(x))$, with $h_t$ bi-Lipschitz (with the same constant as $H$) for every $t<\ep$ and   such that there is a real number $\alpha>0$ such that   \begin{equation}\label{eq_e_n_regulier}
d(e_n,T_{x} h(X_{t}))\geq \alpha,
\end{equation}for any
$x \in h(X_{t})_{reg}$ and  $t$ positive small enough.  Up to a translation, we may
assume that $h_t(0)\equiv 0$ so that $h$ maps $\C_{n}(M)$ into
$\C_n(M')$, for some $M'$. Up to a $x_1$-preserving linear mapping, we may assume $M=M'$.

We now  check
 that $e_n$ is regular for the germ of $Y:=h(X)$. Suppose not. It means that  the element $(0,e_n)$ belongs to the closure of the set:
$$\{(x,u): x \in Y_{reg} \mbox{ and } u \in T_x Y_{reg} \}.$$

 As a matter of fact, by curve selection Lemma (see Appendix I),  there exists
 an analytic arc $\gamma : [0;\varepsilon]\rightarrow Y_{reg}$
 with $\gamma(0)=0$ and  $e_n \in \tau :=\lim_{t \to 0} T_{\gamma(t)}
 Y_{reg}$. On the other hand, by (\ref{eq_e_n_regulier}),  we  have $e_n \notin \lim_{t \to 0} T_{\gamma(t)}
 Y_{\gamma_{1}(t)}$. This implies that $$\tau \cap <e_{1}> ^\perp \neq \lim_{t \to 0}\, (T_{\gamma(t)}
 Y_{reg} \cap <e_{1}> ^\perp),$$  and consequently $\tau$ may not be transverse to $ <e_{1}> ^\perp
 $ (since otherwise the intersection with the limit would be the limit of the intersection), which means that  $\tau \subseteq <e_{1}> ^\perp $. This implies that the limit
 vector  $\lim_{t \to 0}
 \frac{\gamma(t)}{|\gamma(t)|} =\lim_{t \to 0}
 \frac{\gamma'(t)}{|\gamma'(t)|}\in \tau $ is orthogonal to $e_{1}$. Therefore,
  $$\lim_{t \to 0} \frac {\gamma_{1}(t)}{|\gamma(t)|}=0,$$
  in contradiction with  $\gamma(t) \in \C_n(M)$.

 Let us now show that $h$ is also Lipschitz with respect to the parameter $x_1$.    Suppose that the germ of  $h$ fails to be Lipschitz.
 In this case, the element $(0,0,0)$ belongs to the closure of the set germ:
$$\{(p,q,z): p \in \C_n(M) , q \in \C_n(M), p \neq q, z=\frac{|p-q|}{|h(p)-h(q)|} \}.$$

Then, by curve selection Lemma (see Appendix I), we can find two analytic arcs in $\C_n(M)$,
say $p(t)$ and $q(t)$, tending to zero and along which:
\begin{equation}\label{eq hyp p et q }
|p(t)-q(t)| \ll |h(p(t))-h(q(t))|.
\end{equation}

 Recall that $h$ preserves the fibers of $\pi_1$, the projection onto the first coordinate. We may assume that  $p(t)$ (and thus $h(p(t))$ too) is parameterized by its $x_1$-coordinate, i.
e., we may assume $\pi_1(p(t))=t$, $t>0$ small ($f(t):=\pi_1(p(t))$ being a real analytic function, it induces a homeomorphism in a right-hand-side neighborhood of the origin whose inverse $f^{-1}$ is a Puiseux series). As  $p(t)$ and $h(p(t))$ are Puiseux arcs in $\C_n(M)$ we
have:
\begin{equation}\label{eq proof r lips along h(p)}
|h(p(t))-h(p(t'))| \sim |t-t'|
\end{equation} and
\begin{equation}\label{eq proof r lips along p}
|p(t)-p(t')| \sim |t-t'| \leq |p(t)-q(t)|,
\end{equation}
where $t'$ denotes the first coordinate  of $q(t)$.

Therefore, by  (\ref{eq hyp p et q }) and (\ref{eq proof r lips
along h(p)}) and (\ref{eq proof r lips along p}) we have  for some constant $C \in \R$:
$$|h(p(t))-h(q(t))| \sim |h(p(t'))-h(q(t))| \sim |p(t')-q(t)|\leq C |p(t)-q(t)|,$$
%
 a contradiction. Arguing in the same way on $h^{-1}$, we could show that $h$ is
 bi-Lipschitz.
\end{proof}

 There is a close interplay between Lipschitz functions and regular vectors.

\begin{lem}\label{lem_graph_proj_reg}
Assume that  $e_n$ is regular for a set $X\subset \R^n$.  Then  $X$ is contained in  the union of the respective graphs of some Lipschitz  functions $\xi_i:\R^{n-1} \to \R$, $i=1,\dots,k$.
\end{lem}
\begin{proof}
Take a cell decomposition compatible with $X$.
 Since $e_n$ is regular for $X$, the set $X$ is  the union of some cells which are graphs (not bands, see Definition \ref{dfn_cell_decomposition}) of some analytic functions $\eta_i:D_i \to \R$, $i=1,\dots,k$, where $D_i\subset \R^{n-1}$.
  Observe that,  because $e_n$ is regular for their graph, the $\eta_i$'s have bounded derivatives.

 By Theorem $1.2$ of \cite{kp}, there is  a finite  partition  of every  $D_i$ into analytic manifolds, say $D_{i,1},\dots, D_{i,m_i}$,   and a constant $M$ such that  such that any given two points $x$ and $y$ in the same $D_{i,j}$ may be joint by an arc whose length does not exceed $M|x-y|$. This implies that any given smooth function $f:D_{i,j} \to \R$, $j\leq m_i$, which has bounded derivatives  is Lipschitz. In particular,  $\eta_i$ induces a Lipschitz function on every $D_{i,j}$, say $\eta_{i,j}$.

 Now, the lemma follows from the fact that we can extend each $\eta_{i,j}:D_{i,j} \to \R$ to a Lipschitz function $\xi_{i,j} :\R^{n-1}\to \R$ by setting:
$$\xi_{i,j}(x):=\inf \{\eta_{i,j}(y)+L_{i,j}|x-y|:y \in D_{i,j}\},$$
where $L_{i,j}$ denotes the Lipschitz constant of $\eta_{i,j}$.
\end{proof}

\subsection{Some preliminaries.}
 Before constructing the desired retraction, we need to put the set in a nice position.
 For this purpose, we will need yet another result whose proof may be found in \cite{v1} as well (Proposition \ref{lem function eq  aux distances} below).
  It is a consequence of the preparation theorem \cite{p},
   \cite{lr}, \cite{ds}.

Basically, this proposition says that  distance functions (i.e. functions of type $x \mapsto d(x,W)$, $W \subset \R^n$) may be used as a "basis of valuations", in the sense that every (globally subanalytic) nonnegative function may be
compared (up to constants) to a product of powers of distance functions (after a partition).

 We recall that, except the differential forms, all the sets and functions of this paper are assumed to be globally subanalytic.

\begin{pro}\label{lem function eq  aux distances}
Let $X\subset \R^n$  and let $\xi : X \rightarrow \R$ be a
nonnegative   function. There exists a finite partition of $X$
such that over each element of this partition the function $\xi$
is $\eqr$ to a product of powers of distances to subsets of $X$.
\end{pro}

The powers involved in the above proposition are always rational numbers.

\begin{rem}\label{rmk graphes en plus}
 We now would like to formulate two observations that will be useful  in the proof of the next lemma.
{\setlength{\leftmargini}{4pt}\begin{enumerate}\item If $X$ is the union of the graphs  of finitely many Lipschitz functions $\xi_1 , \dots ,\xi_k$ over $\R^n$ then, using the operators $\min$ and $\max$,  we may
  find an ordered family of  Lipschitz  functions  $\theta_1 \leq \dots
  \leq \theta_k$ such that $X$ is the union of the graphs of these functions.

\item Given a family of    Lipschitz functions $f_1, \dots, f_k$
defined over $\R^{n-1}$, we can find some Lipschitz functions
$\xi_1\leq \dots\leq   \xi_m$ on $\R^{n-1}$ and a cell decomposition $\D$ of
$\R^{n-1}$ such that over each $[\xi_{i|D};\xi_{i+1|D}]$,
where $D\in \D$, the family of functions $$|y-f_1(x)|,\dots ,|y-f_k(x)|,f_1(x),\dots ,f_k(x),$$ (for
$(x;y) \in [\xi_{i|D};\xi_{i+1|D}]$) is totally ordered
 (for relation
$\leq$). Indeed, it suffices to choose a cell decomposition $\D$ of $\R^{n-1}$ compatible with the sets $f_i=f_j$ and to  apply $(1)$ to   the
 functions   $f_i$,  $(f_i-f_j)$, $(f_i+f_j)$,  and  $\frac{f_i+f_j}{2}$, $i\leq k,j \leq k$.
\end{enumerate}} \end{rem}

\medskip
The lemma below somehow combines Corollary \ref{lem existence proj reg preservant xn+1} and Proposition \ref{lem function eq  aux distances} in a
single statement.  We denote by $\pi_n:\R^{n} \to \R^{n-1}$ the
orthogonal projection onto $\R^{n-1}$.

\begin{lem}\label{prop proj reg}
Given some germs  $X_1,\dots,X_s \subset \C_{n}(M)$ at $0$, there exist a germ of
$x_1$-preserving bi-Lipschitz
  homeomorphism (onto its image) $h:\C_n(M) \to \C_{n}(M)$ and a cell decomposition $\mathcal{E}$ of $\R^n$ such
  that for some representatives of the germs:
  \begin{enumerate}
  \item  $\E$ is compatible with $h(X_1),\dots, h(X_s)$ and  $h(\C_{n}(M))$. \item $e_n$ is regular for any cell of $\E$
   which is a graph (not a band, see Definition \ref{dfn_cell_decomposition}).
\item Given finitely many nonnegative   functions
$\xi_1,\dots,\xi_m$ on $\C_{n}(M)$, we may assume that on each
cell $E\subset h(\C_{n}(M))$ of $\mathcal{E}$, each function $\xi_i\circ h^{-1}$ is $\sim$
to a function of the form:\begin{equation}\label{eq
prep}|y-\theta(x)|^r a(x)\end{equation} (for $(x;y) \in \R^{n-1}
\times \R$) where
$a,\theta:\pi_n(E) \to \R$ are   functions with $\theta$ Lipschitz, $r \in \Q$. 
\end{enumerate}
\end{lem}
\begin{proof}
It will be convenient to complete the family $X_1,\dots,X_s$ by setting $X_{s+1}:= \C_{n}(M)$.
 Apply Proposition \ref{lem function eq aux distances} to the
functions $\xi_j$, $j=1,\dots,m$. This provides a partition
$E_1,\dots,E_b$   of $\C_{n}(M)$ together with some subsets of
$\C_{n}(M)$, say $W_1,\dots,W_c$, such that on each $E_i$, $i\leq
b$, each  function $\xi_j$, $j\leq m$, is equivalent to a product
of powers of functions of type  $q \mapsto d(q;W_k)$, $k \leq c$.

Possibly refining the partition $E_i$, we may assume that the $W_k$'s are unions of some elements of this partition (thanks to existence of cell decompositions, see Appendix I). Hence, on  every $E_i$, if $d(x,W_k)$ is not identically zero, then it is nowhere zero and $d(x,W_k)$ is equivalent to $d(x,\delta W_k)$.   Therefore, we may assume that the $W_{k}$'s have empty
interior, possibly replacing them with their boundaries (if a function $\xi_j$ is identically zero on
$E_i$ then $(3)$ is trivial on $E_i$).

 Apply now Corollary \ref{lem existence proj reg preservant xn+1} to the union of the  $\delta X_i$'s,
   the $\delta E_i$'s, and the $W_k$'s. This provides a germ of $x_1$-preserving bi-Lipschitz homeomorphism $h:\C_{n}(M) \to \C_n(M)$
    which maps the latter subsets
   into the union of the graphs of some   Lipschitz functions $\theta_1, \dots , \theta_d $.


 By Remark \ref{rmk graphes en plus} $(2)$ applied to the family of functions constituted by the $\theta_i$'s together with all
the $(n-1)$-variable functions $x \mapsto d(x;\pi_n( W_k\cap
\Gamma_{\theta_\nu}))$, $\nu \le d,\, k \le c$, we know that there exist a finite number of
functions $\eta_1\leq\dots\leq \eta_p$ and a cell decomposition $\D$ of $\R^{n-1}$  such that for every $D \in \D$, over each
$[\eta_{i,|D};\eta_{i+1,|D}]$, $i<p$,  the family constituted by all the $n$-variable functions
$ |y-\theta_\nu(x)| , \,  \nu \leq d,$ together with the functions $$ \, x \mapsto d(x;\pi_n( W_k\cap
\Gamma_{\theta_\nu})), \, \nu \le d,\, k \le c$$  is totally ordered (for order relation $\leq$,
considering the latter functions as  $n$-variable functions). By $(1)$ of Remark \ref{rmk graphes en plus}, we can find a totally ordered finite family $\sigma_1 \leq \dots\leq \sigma_\mu$ such that $ \cup_{i=1} ^\mu \Gamma_{\sigma_i}$ contains both the graphs of the  $\theta_i$'s and the graphs of the $\eta_i$'s.

Consider a cell decomposition $\D'$ of $\R^{n}$ compatible  with the cells of  $\D$, the sets defined by all the equations
$\sigma_{j} =\sigma_i$, $i\le  \mu$, $ j\le \mu$, as well as all the sets $h(X_j) \cap \Gamma_{\sigma_{i}}$, $j \leq s+1$, $ i \le \mu$. The graphs of the respective
restrictions of the functions $\sigma_1,\dots,\sigma_\mu$, to the sets $\pi_n(E)$, $E\in \D'$, define a cell decomposition $\E$ of $\R^n$.

For a proof of  $(1)$, take a cell $E \in \E$, $E \subset \C_n(M)$. If $E$ is a graph (not a band) then $(1)$ for $E$ follows  from the fact that $\D'$ is compatible with the    $h(X_j) \cap \Gamma_{\sigma_{i}}$'s. Assume thus that  $E$ is a band, say $(\sigma_{i|D},\sigma_{i+1|D})$ where  $i< \mu$, $D\subset \R^{n-1}$.   As $\delta h(X_j) \subset \cup_{k=1} ^\mu  \Gamma_{\sigma_k}$, for all $j$,  the set $E \cap h(X_j)$ is open and closed in $E$. Hence, if $E \cap h(X_j)$ is nonempty it is equal to $E$ ($E$ is connected). This yields $(1)$.

Observe that $e_n$ is regular for any cell of $\E$ which is a
graph, since the $\sigma_i$'s are
Lipschitz functions. This already proves that $(2)$ holds.

To prove $(3)$, fix a cell $E\subset h( \C_{n}(M))$ of $\E$ which  is  a band, say $(\sigma_{k|D},\sigma_{k+1|D})$ where  $k< \mu$, $D \subset \R^{n-1}$ ($(3)$ is trivial if $E$ is a graph). We first check that $E$ is included in  $h(E_i)$, for some $i$.    As $\delta h(E_i) \subset \cup_{k=1} ^\mu  \Gamma_{\sigma_k}$, for each $i$, the set $E \cap h(E_i)$ is open and closed in $E$. Hence, if $E \cap h(E_i)$ is nonempty it is equal to $E$.  As $h(E_1),\dots, h(E_b)$ constitute a partition of $h(\C_n(M))$, this shows that $E \subset h(E_i)$, for some $i$.

 Consequently, as $h$ is bi-Lipschitz, each $\xi_j \circ h^{-1}$ is equivalent to a product of powers of functions of type $q \mapsto d(q;h( W_i))$, $i\leq c$. It is thus enough to show (\ref{eq prep}) for these latter functions.

  As the  $\theta_\nu$'s
are Lipschitz functions, we have for any $\nu \in \{1,\dots,d\}$:
\begin{equation}\label{eq fin triang}
d(q; h(W_i) \cap \Gamma_{\theta_\nu} ) \eqr |y
-\theta_\nu(x)|+d(x;\pi_n(h( W_i) \cap \Gamma_{\theta_\nu} ))
\end{equation}
where $q=(x;y)$ in $  \R^{n-1} \times \R$.

  By construction $E \subset [\eta_{k,|A},\eta_{k+1,|A}]$, for some $k<p$ and $A \subset \R^{n-1}$.  As a matter of fact, for every $i$, the terms of the  right-hand-side are  comparable with
each other (for partial order relation $\leq$) over the cell $E$.
  Therefore, the left-hand-side is $\sim$   to one
 of them on $E$.

Note that, as each $h(W_i)$ is included in the union of the graphs of the $\theta_\nu$'s, we have:
   $$ d(q; h(W_i))
 =\underset{1 \leq \nu\leq d}{\min}
 d(q;h( W_i )\cap \Gamma_{\theta_\nu} ).$$


The latter family of functions is totally ordered over $E$. Hence, by $(\ref{eq fin triang})$, each  function $d(q;h(W_i))$
is
 equivalent over $E$ either to one of the functions $x \mapsto  d(x;\pi_n(h( W_i) \cap \Gamma_{\theta_\nu} )), $  or to
 some  function $(x;y)\mapsto  |y -\theta_\nu(x)|$, $\nu\in\{1,\dots,d\}$.
 Thus, $(3)$ holds.
\end{proof}

%

\subsection{Lipschitz retractions of subanalytic germs.}

We are now ready to construct the desired strong deformation
retraction. Given  $X \subset \R^n$  we  define:
\begin{equation}
\label{eq_X_hat}
\hat{X}:=\{(y;x)\in\R \times X: |x|=y\} .
\end{equation}
Observe that $\hat{X}$ is a subset of $\C_{n+1}(1)$.

In the theorem below we write $d_xr_t$ for the derivative of $r_t$ which exists  for $x$ generic although $r_t$ is not smooth since, like all the mappings in this paper, $r$ is implicitly assumed  to be subanalytic and thus smooth on a (subanalytic) dense subset.

 By {\bf Lipschitz deformation retraction onto $x_0$}, we mean a Lipschitz family of maps $r_t$ with $r_0(x)\equiv x_0$ and $r_1(x)\equiv x$.

\begin{thm}\label{thm_retraction}
Let $X\subset\R^n$ be locally closed and let $x_0\in  X$. Then, for any  $\ep>0$
small enough there exists a  Lipschitz deformation retraction $$r:X \cap
B^n(x_0;\ep)
\times [0;1] \to X \cap
B^n(x_0;\ep), \quad (x,t) \mapsto r_t(x),$$   onto $x_0$, preserving $X_{reg}$ for $t>0$.

Furthermore,
the derivative $d_x r_t$ tends to  $0$ as $t\to 0$ for any $x$ generic in $X_{reg}$.
\end{thm}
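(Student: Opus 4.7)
After translation we may assume $x_0=0$. We work not with $X$ itself but with its radial embedding
\[
\hat X := \{(|x|;x) : x\in X\} \subset \C_{n+1}(1),
\]
because $x\mapsto (|x|;x)$ is a bi-Lipschitz bijection of $X$ onto $\hat X$, and the first coordinate of $\hat X$ is exactly the distance to the origin. If we can build a Lipschitz deformation retraction $\rho_t$ of $\hat X \cap B$ onto $0$ whose differential tends to zero, pulling back by the projection $\R^{n+1}\to\R^n$ gives the desired $r_t$.

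\textbf{Putting $\hat X$ in position.} Apply Lemma \ref{prop proj reg} to $\hat X$, iterating it on the successive projections $\pi_{n+1},\pi_{n},\dots$ so that the resulting cell decomposition $\E$ is cylindrical with $e_k$ regular at every level. After the induced $x_1$-preserving bi-Lipschitz homeomorphism $h:\C_{n+1}(1)\to \C_{n+1}(M')$, every cell of $\E$ is either a Lipschitz graph $\Gamma_{\xi|D}$ over a lower-dimensional cell $D$, or a Lipschitz band $[\xi_i;\xi_{i+1}]|_D$. We feed to the lemma as auxiliary data both $X_{sing}$ (so that the strata $X_{reg}$ and $X_{sing}$ are unions of cells) and the gap functions $\xi_{i+1}-\xi_i$, so that part $(3)$ gives every such gap a prepared form $|y-\theta(x)|^r a(x)$ on each cell.

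\textbf{Recursive definition of $\rho_t$.} At the innermost coordinate, set $\rho_t(x_1):=tx_1$. Assuming $\rho_t^D$ has been defined on a cell $D$ of the previous level, extend to the next level by
\[
\rho_t(x,\xi(x)):=(\rho_t^D(x);\,\xi(\rho_t^D(x)))
\]
on a graph cell $\Gamma_{\xi|D}$, and, writing $\lambda:=(y-\xi_i(x))/(\xi_{i+1}(x)-\xi_i(x))\in[0;1]$,
\[
\rho_t(x,y):=\bigl(\rho_t^D(x);\,\xi_i(\rho_t^D(x))+\lambda\,[\xi_{i+1}(\rho_t^D(x))-\xi_i(\rho_t^D(x))]\bigr)
\]
on a band $[\xi_i;\xi_{i+1}]|_D$. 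Graph and band formulas agree at $\lambda\in\{0,1\}$, hence $\rho_t$ is well defined and continuous on $h(\hat X)$; we have $\rho_1=\mathrm{Id}$ and $\rho_0\equiv 0$, and the cell compatibility with $X_{reg}$ guarantees $\rho_t(X_{reg})\subset X_{reg}$ under pullback.

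\textbf{Properties and the main obstacle.} Lipschitzness of $\rho_t$ follows inductively from the Lipschitz property of the $\xi_i$ and of $\rho_t^D$. The derivative estimate $d\rho_t\to 0$ is easy in the base direction because the inductive hypothesis gives $d\rho_t^D\to 0$ and the $\xi_i$ are Lipschitz. The one delicate term---the main obstacle---is the vertical derivative on a band,
\[
\partial_y\rho_t(x,y)=\Bigl(0;\,\frac{\xi_{i+1}(\rho_t^D(x))-\xi_i(\rho_t^D(x))}{\xi_{i+1}(x)-\xi_i(x)}\Bigr),
\]
which must be shown to go to $0$ with $t$ rather than to stay bounded away from $0$. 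This is where part $(3)$ of Lemma \ref{prop proj reg} is crucial: writing $\xi_{i+1}-\xi_i\sim|y-\theta|^r a$ on the cell and using that $\rho_t^D$ contracts cone-like in the radial coordinate (by induction), the numerator and denominator differ by a positive power of $t$, giving $\partial_y\rho_t=O(t^\alpha)$. Transporting back via $r_t:=\pi\circ h^{-1}\circ\rho_t\circ h(|\cdot|,\cdot)$ with $\pi:\R^{n+1}\to\R^n$ dropping the first coordinate, and using that $h$ is bi-Lipschitz and $x_1$-preserving, produces the desired retraction on $X$.
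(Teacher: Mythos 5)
Your skeleton is the right one and matches the paper's: reduce to $\hat X\subset\C_{n+1}(1)$, apply Lemma \ref{prop proj reg} to get a cylindrical cell decomposition with regular directions, lift a retraction of the base affinely on each band, and induct on the dimension. But there is a genuine gap at exactly the point you flag as ``the main obstacle,'' and it is not patched by what you say. The vertical stretch factor of your lift on a band is the ratio
$$\frac{(\xi_{i+1}-\xi_i)(\rho_t^D(x))}{(\xi_{i+1}-\xi_i)(x)},$$
and two different things must be proved about it: that it is \emph{bounded uniformly in $x$ and $t$} (this is what Lipschitzness of $\rho_t$ requires), and that it tends to $0$ pointwise for generic $x$ (which is comparatively easy, since $\rho_t^D(x)\to 0$ and the gap vanishes at the vertex). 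Your claim that ``Lipschitzness of $\rho_t$ follows inductively from the Lipschitz property of the $\xi_i$ and of $\rho_t^D$'' is false: Lipschitzness of the graphs and of the base retraction does not bound this ratio, because the gap $\xi_{i+1}-\xi_i$ may vanish to different orders in different directions and $\rho_t^D$ moves points across these regions. Likewise, part $(3)$ of Lemma \ref{prop proj reg} only puts the gap in the prepared form $|y-\theta|^r a$; it does not by itself yield ``numerator and denominator differ by a positive power of $t$,'' and since the base retraction at higher levels is itself a complicated lift (not $x\mapsto tx$), no such power-of-$t$ comparison is available.

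What the paper does, and what is missing from your argument, is to \emph{strengthen the induction hypothesis}: the statement $(\textrm{A}_n)$ proved by induction includes, besides preservation of the strata and the decay of the derivative, the functional inequality $\xi_i(r(x;t))\leq C\,\xi_i(x)$ of (\ref{eq decroissance fn up to contant}) for an arbitrary finite family of bounded functions. At each stage one manufactures from the prepared forms an auxiliary family of $(n-1)$-variable functions (the $\kappa_k=|\eta_j-\theta_k|^{\alpha_k}a_k$, the truncations $\min((\eta_{j+1}-\eta_j)a_k;1)$, the gaps $\eta_{j+1}-\eta_j$, and the restrictions $\xi_i(x;\eta_j(x))$) and feeds them to the induction at level $n-1$; the case distinction on the sign of the exponent $\alpha_k$, via the $\min/\max$ identities (\ref{eq min})--(\ref{eq max}) and Observations 1 and 2, is what makes the bounded-ratio estimate go through. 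Without carrying this extra clause through the induction, your recursion does not close: you cannot even assert that $\rho_t$ is Lipschitz, let alone control the vertical derivative. (This is precisely the difficulty with negative exponents alluded to in Remark \ref{rem lt}.)
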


\begin{proof}
 We will assume
for simplicity that $x_0=0$.
 We will
actually prove by induction on $n$ the following
statements.

\bigskip

{\bf$(\textrm{A}_n)$} Let $X_1,\dots,X_s$ be finitely many subsets
of $\C_n(M)$ and let $\xi_1,\dots,\xi_m$ be  bounded
  functions on $\C_n(M)$, with $M>0$. There exists $\ep>0$ such that if we set $U_\ep:= \{x\in \C_n(M):0 \leq x_1<\ep
  \}$, there is a
Lipschitz strong deformation retraction of $U_\ep$
$$r: U_\ep\times [0;1] \to U_\ep,\quad (x,t) \mapsto r_t(x), $$ onto $0$ such that for any $j\leq s$:
\begin{enumerate}
 \item
 $r_t$ preserves   $X_j\cap U_\ep$  for $t \in (0;1]$ \item    $d_x r_t$ goes
to
 zero as $t$ tends to $0$ for any $x$ generic in $X_j\cap U_\ep$
 \item There is a constant $C$ such that
for any $i$ and any $0<t \le 1$ we have for all $x \in U_\ep$:\begin{equation}\label{eq
decroissance fn up to contant}\xi_i(r_t(x))\leq C
\xi_i(x).\end{equation}
\end{enumerate}
\bigskip

Before proving these statements, let us make it clear that this
implies the desired result. If $X\subset\R^n$ then $\hat{X}$ (see (\ref{eq_X_hat}) for $\hat{X}$) is a subset of $\C_{n+1}(1)$ to which we
can apply {\bf$(\textrm{A}_{n+1})$}. Then,  as $\hat{X}$ is
bi-Lipschitz equivalent to $X$, the result immediately ensues. Thanks to $(1)$, we may assume that the retraction preserves $X_{reg}$.

\bigskip

As the theorem obviously holds in the case where $n=1$ (with
$r_t(x)=tx$), we fix some $n>1$. We also fix some subsets
$X_1,\dots,X_s$ of $\C_n(M)$, for $M>0$,  and  some
  bounded functions $\xi_1, \dots,\xi_m:\C_n(M)\to\R$.

 \medskip

Before defining the desired map, we need some
preliminaries: we first construct a family of bounded $(n-1)$-variable
functions $\sigma_1,\dots,\sigma_p$ to which we will apply $(3)$
of {\bf$(\textrm{A}_{n-1})$}.

\medskip

Apply Lemma \ref{prop proj reg} to the family constituted by the
germs of the $X_i$'s   and the zero loci of the $\xi_i$'s.  We get a
$x_1$-preserving bi-Lipschitz map  $h:\C_n(M)\to \C_n(M)$ and a
cell decomposition $\E$ such that $(1)$ and $(2)$ of the latter
lemma hold. Moreover, thanks to $(3)$ of the latter Lemma, we
may also assume that the $\xi_i $'s
 are like in (\ref{eq prep}) on every cell.  As we may work up to a $x_1$-preserving
 bi-Lipschitz map we will identify $h$ with the identity map.

By Lemma \ref{lem_graph_proj_reg}, the union of  the cells of $\E$ for which $e_n$ is regular  may be
included in the union of the graphs of finitely many Lipschitz
functions $\eta_1 , \dots ,\eta_v$. Moreover, by Remark \ref{rmk
graphes en plus} $(1)$, we can assume $\eta_1 \leq \dots \leq\eta_v$.

In order to define the desired functions $\sigma_1,\dots,\sigma_p$, let us
fix  a cell $A$ of $\E$, and set  $A':=\pi_n(A)$, $\pi_n:\R^n \to \R^{n-1}$ denoting the projection onto the $(n-1)$ first coordinates. Choose then  $j < v$ and set $D:= (\eta_{j|A'}; \eta_{j+1|A'})$. By construction, $D$ is included in a cell of $\E$.

Since  the $\xi_k $'s
 are like in (\ref{eq prep}) on $D$, for every $k=1,\dots,m$, there exist some $(n-1)$-variable functions on $A'$, say $\theta_k$ and $a_k$, such that for $(x;y)\in D \subset \R^{n-1}\times \R$: \begin{equation}\label{eq_proof_prep}\xi_k(x;y) \sim |y-\theta_k(x)|^{\alpha_k}
a_k(x),\end{equation} where $\alpha_k$ is a rational number
(possibly negative).

As $\E$ is compatible with  the zero loci of the $\xi_k$'s, we have on  $A'$: if $\xi_k$ is not identically zero on $D$ then either
  $\theta_k \leq \eta_j$ or $\theta_k \geq \eta_{j+1}$. Fix $k$ with $\xi_k\neq 0$ on $D$. We will  assume for simplicity  that  $\theta_k \leq \eta_j$.

It means that on $D$:
\begin{equation}\label{eq min}\xi_k(x;y) \sim \min ((y-\eta_j(x)) ^{\alpha_k} a_k(x)
;(\eta_j(x)-\theta_k(x)) ^{\alpha_k} a_k(x)),\end{equation}  if
$\alpha_k$ is negative, and
\begin{equation}\label{eq max}\xi_k(x;y) \sim \max ((y-\eta_j(x)) ^{\alpha_k} a_k(x)
;(\eta_j(x)-\theta_k(x)) ^{\alpha_k}  a_k(x)),\end{equation} in the
case where $\alpha_k$ is nonnegative.

We are now ready to define the desired family
$\sigma_1,\dots,\sigma_p$ of $(n-1)$-variable
functions. 
 We first set for $\xi_k \neq 0$ on $D$:
\begin{equation}\label{eqdefkappak}\kappa_k(x):=|\eta_{j}(x)-\theta_k(x)|^{\alpha_k} a_k(x).\end{equation} 
Since $\xi_k$ is bounded, by (\ref{eq prep}), this defines a bounded function. Complete the family $\kappa$ by adding the functions $\min(f;1)$
where $f$ describes all the $(\eta_{j+1}-\eta_j)a_k$'s.

 Doing this
for all the cells  $A \in \E$ and integers $j<v$, and collecting all the respective families $\kappa$ obtained in this way, we eventually get a family of bounded
functions $\sigma_1,\dots,\sigma_p$.

We now turn to the construction of the desired retraction. Consider a
cylindrical cell decomposition $\mathcal{D}$ compatible with the cells of
$\E$ and the graphs of
the $\eta_j$'s. 
Apply the induction hypothesis to the  family of sets $\pi_n (D) \cap \C_{n-1}(M)$, $D \in \mathcal{D}$. This provides a   deformation retraction $r:V_\ep \times [0;1]\to V_\ep$, $\ep >0$, where $V_{\ep}:=\{x \in \C_{n-1}(M):0\le x_1<\ep \}$.

We are going to lift $r$ to a retraction of $[\eta_{1|V_\ep},\eta_{v|V_\ep} ]$. Thanks to the
induction hypothesis, we may assume that the functions
$\sigma_1,\dots,\sigma_p$, as well as the
$(\eta_{j+1}-\eta_j)$'s and the functions $x \mapsto
\xi_i(x;\eta_j(x))$  satisfy (\ref{eq decroissance fn up to
contant}). 

Now, we may lift $r$ as follows. On $(\eta_j;\eta_{j+1})$, $j=1,\dots,v-1$, we set
$$\nu(q):= \frac{y-\eta_{j}(x)}{\eta_{j+1}(x)-\eta_{j}(x)},$$ if
$q=(x;y) \in (\eta_j;\eta_{j+1})\subset \R^{n-1}\times \R$, and then
$$\widetilde{r}_t(q):=(r_t(x);\nu(q)(\eta_{j+1}(r_t(x))-\eta_j(r_t(x))) +\eta_j(r_t(x))).$$
 This mapping is then easily extended continuously on each $\Gamma_{\eta_j}$ by setting if $q=(x;\eta_j(x))$:
 $$\widetilde{r}_t(q):=(r_t(x);\eta_j(r_t(x))).$$

For any $j$, the mapping $\tilde{r}$ maps linearly the segment
$[\eta_j(x);\eta_{j+1}(x)]$ onto the segment
$[\eta_j(r_t(x));\eta_{j+1}(r_t(x))]$. Thanks to the induction
hypothesis, the inequality (\ref{eq decroissance fn up to
contant}) is fulfilled by the function $(\eta_{j+1}-\eta_j)$.
Therefore, as $r$ is Lipschitz, we see that $\widetilde{r}$ is
Lipschitz as well. As $\widetilde{r}$ preserves the cells of $\E$, it preserves the $X_j$'s, the zero loci of the $\xi_k$'s, and $U_\ep$.

 We
have to check that the $\xi_k$'s fulfill (\ref{eq decroissance fn
up to contant}) along the trajectories of $\widetilde{r}$. We
check it on a given cell $E$ of $\D$. If $E\subset \Gamma_{\eta_j}$ for some $j$, this follows from the induction
hypothesis since we have assumed that the functions $x \mapsto
\xi_k(x;\eta_j(x))$ satisfy  (\ref{eq decroissance fn up to
contant}) on $E$.

 Otherwise, there exists $j$ such that $E$ sits in $(\eta_j;\eta_{j+1})$. Fix an integer $1 \leq k
\leq m$. On the cell $E$, the function $\xi_k$ may be estimated as
in (\ref{eq_proof_prep}). By the induction hypothesis we know that
$\kappa_k$ (see (\ref{eqdefkappak}),  if $\xi_k = 0$ on $E$ then  (\ref{eq decroissance fn
up to contant}) is trivial for $\xi_k$) satisfies (\ref{eq
decroissance fn up to contant}). 

If a function $\xi$ is bounded and if
$\min(\xi;1)$ satisfies $(\ref{eq decroissance fn up to contant})$
then $\xi$ satisfies this inequality as well.
We will therefore check (\ref{eq decroissance fn up to contant}) for $\min(\xi_k;1)$.

  Observe also that if two given functions $\xi$ and
$\zeta$ both satisfy (\ref{eq decroissance fn up to contant}) then
$\min(\xi;\zeta)$ and $\max(\xi;\zeta)$ both satisfy this
inequality as well.   Hence, by (\ref{eq min}) and (\ref{eq max}),
  it is enough to show that the functions $\min((y-\eta_j(x))^{\alpha_k} a_k(x);1)$
   and the functions $|\theta_k-\eta_j| ^{\alpha_k} a_k$ satisfy (\ref{eq decroissance fn
up to contant}).
 The latter functions  are nothing but  the $\kappa_k$'s for which we already have seen that this inequality is true.
 Let us focus on the former functions.

For simplicity we set $$F(x;y):=(y-\eta_j(x))^{\alpha_k} a_k(x)$$
and $$G(x):=(\eta_{j+1}-\eta_j)(x)^{\alpha_k}  a_k(x).$$

We have to show the
desired inequality for $\min(F;1)$. We have:
\begin{equation}\label{eq F G}F(x;y)=\nu(q)^{\alpha_k} \cdot G(x).\end{equation}

 Remark that the function
$\nu(\widetilde{r}_t(q))$ is constant
with respect to $t$. This
implies that:
\begin{equation}\label{eq2 F G}F(\widetilde{r}_t(q))=\nu(q)^{\alpha_k} \cdot G(r_t(x)).\end{equation}

We assume first that $\alpha_k$ is negative. Thanks to the
induction hypothesis ($\min(G;1)$ is one of the $\sigma_i$'s) we
know that for some constant $C$ we have for all $x$ in $\pi_n(E)$:
$$\min(G(r_t(x));1)\leq C \min (G(x);1).$$

This implies (multiplying by $\nu^{\alpha_k}$ and applying
(\ref{eq F G}) and (\ref{eq2 F G})) that for $q \in E$:
$$\min(F(\widetilde{r}_t(q));\nu^{\alpha_k}(q);1)\leq C\min (F(q);\nu^{\alpha_k}(q);1). $$
But, as $\alpha_k$ is negative,
$\min(F;\nu^{\alpha_k};1)=\min(F;1)$, which yields the desired
inequality for $\min(F;1)$, as required.

\medskip

We now assume that $\alpha_k$ is nonnegative. 
Thanks to (\ref{eq F G}) and (\ref{eq2 F G}), it
actually suffices to show the desired inequality for $G$. But, as $\xi_k$ is bounded,
by (\ref{eq
max}) so is $G$,  and the result follows from the induction hypothesis  since $\min(G;1)$ is one of the $\sigma_i$'s (as $G$ is bounded and $\min(G,1)$ satisfies (\ref{eq decroissance fn up to contant}) then $G$ satisfies this inequality as well). This yields (\ref{eq decroissance fn up to
contant}) along the trajectories of $\widetilde{r}$.

We now check that $d_q \widetilde{r}_t$ tends to zero when $t$ goes to zero. It
follows from the induction  hypothesis that $d_x r_t$ goes to zero
as $t$ goes to zero, for any $x$ generic. As the $\eta_i$'s have bounded derivatives,
this already proves for almost every $x$:
\begin{equation}\label{eq der de eta} \underset{t \to 0}{\lim}
\;\: d_x [(\eta_i -\eta_{i+1}) \circ r_t] =0.\end{equation}

On the other hand, a straightforward computation shows that for
$q=(x;y)$:
$$|d_{\widetilde{r}_t(q)} \nu| \leq  \frac{C}{|\eta_i(x)-\eta_{i+1}(x)|} ,$$ which,
together with  (\ref{eq der de eta}) and (\ref{eq decroissance fn
up to contant}) for $(\eta_{i+1}-\eta_i)$,  implies that $d_q
\widetilde{r}_t$ tends to zero as $t$ goes to zero.
\end{proof}

\begin{rem}\label{rem lt}
The mapping $r_t$ could be proved to be bi-Lipschitz for every $t>0$. The Lipschitz constant of $r_t^{-1}$ may of course tend to infinity as $t$ goes to zero.   We nevertheless could have a control on the way distances are contracted by $r_t$, similarly as in \cite{v1,v2}.   We could show that for a suitable basis of unit $1$-forms $\theta_1,\dots,\theta_n$ and some functions $\varphi_1,\dots,\varphi_n$ on $\R^n$ such that almost everywhere on $X_{reg} \cap B^n(x_0;\ep) \times [0,1]$:  $$r_t^*\rho (x) \approx \sum_{i=1} ^n  \varphi_i (x;t)^2 \theta_i^2(x),$$
 where $\rho$ is the metric of $\R^n$ and $r^*_t\rho$ its pull-back. Similarly as in \cite{v1}, the functions $\varphi_i$, which are the contractions of the metric that $r$ operates,  could be  expressed as powers,  products, and sums of distance functions in $X$ (i. e.  $x \mapsto d(x;W)$ with $W \subset X\cap B^n(0;\ep)$) and  the function $(x;t)\mapsto t$.
These powers may be negative which makes it difficult to get decreasing functions and accounts for the difficulty we have in the proof of the above theorem.
\end{rem}

\section{Normal pseudomanifolds.} In this section, we shall also deal with {\it topological} pseudomanifolds. Given $X\subset \R^n$, denote by $X^0_{reg}$ the set of points of $X$ near which $X$ is a $C^0$-manifold (of any dimension).  We say that a locally closed set  $X$ is an $l$-dimensional {\bf topological pseudomanifold}  if $\dim X\setminus \xb^0<l-1$ and if  $\xb^0$ is an $l$-dimensional manifold.
\begin{dfn}\label{dfn normal}
An $l$-dimensional topological pseudomanifold $X$ is called {\bf normal} if
for any $x$ in $X$, $\dim H_l(X;X\setminus \{x\})=1$.
 \end{dfn}

We shall recall some basic facts about normal pseudomanifolds. These  may
be found in  \cite{gm}  (section $4$) and make normalizations very
useful to investigate intersection homology in the maximal
perversity. Observe that if $X\subset \R^n$ is a normal topological pseudomanifold which is connected then $H_l (X)=\R$, since if
there were two generators, say $\sigma$ and $\tau$, $\dim |\sigma|
\cap |\tau|<l$, we would have $H_l (X;X \setminus x) \neq \R$ at any
point of the intersection of the supports.

\medskip

  The main interest of normal spaces lies in the following Lemma. Denote by $L(x;\xb)$ the set $S^{n-1}(x;\varepsilon) \cap \xb$. It is well known that the topology of   $L(x;\xb)$ is independent of $\varepsilon>0$  small enough.

\begin{lem} \cite{gm}
A  topological pseudomanifold $X\subset \R^n$ is normal if and only if $L(x;\xb^0)$ is
connected at any point of $X \setminus X _{reg} ^0$.
\end{lem}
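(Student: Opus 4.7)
The plan is to reduce the local homology of $X$ at a singular point to the top homology of the link, via the standard excision-plus-cone argument. Points of $X_{reg}$ trivially satisfy the normality condition (a small Euclidean ball yields $H_l(X,X\setminus\{x\})\cong\R$), so only $x\in X_{sing}$ needs attention.

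\emph{Step 1: cone argument.} By the local conical structure of subanalytic sets, for $\ep>0$ small enough the neighborhood $N:=X\cap B_n(x;\ep)$ is subanalytically homeomorphic to the open cone on the link $L:=X\cap S_{n-1}(x;\ep)$. Excision gives $H_l(X,X\setminus\{x\})\cong H_l(N,N\setminus\{x\})$; since $N$ is contractible (a cone) and $N\setminus\{x\}$ deformation retracts onto $L$, the long exact sequence of the pair collapses to
\[
H_l(N,N\setminus\{x\})\;\cong\;\tilde H_{l-1}(L),
\]
so normality at $x$ is equivalent to $\dim \tilde H_{l-1}(L)=1$.

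\emph{Step 2: removing $L_{sing}$ does not disconnect.} The link $L$ is itself a compact $(l-1)$-dimensional subanalytic pseudomanifold, so $L_{sing}$ has dimension at most $l-3$, i.e.\ real codimension at least two inside $L$. Removing such a subset cannot change the number of connected components, and hence $L$ is connected if and only if $L(x;\xb)=L\cap \xb$ is connected.

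\emph{Step 3: connectedness versus top homology.} Finally, I would invoke the classical fact (\cite{gm}, section 4) that for a compact connected subanalytic pseudomanifold of dimension $l-1$, the top real homology is one-dimensional, generated by a fundamental class built from an orientation of the regular stratum, while a disjoint union of $k\ge 2$ such pieces yields a top homology of dimension $k$. Concatenating the three steps gives
\[
\dim H_l(X,X\setminus\{x\})=1 \;\Longleftrightarrow\; \dim \tilde H_{l-1}(L)=1 \;\Longleftrightarrow\; L \text{ connected} \;\Longleftrightarrow\; L(x;\xb) \text{ connected},
\]
which is the claim. The main obstacle is the last equivalence of Step 3: to get a nonzero top class out of bare connectedness one needs some orientability input, which is supplied by orienting $\xb$ componentwise via the ambient orientation of $\R^n$ and transporting this orientation to the regular part of each link as in \cite{gm}; the delicate point is not the excision-cone reduction, which is formal, but checking that in the subanalytic pseudomanifold setting connectedness and orientability together really do produce a one-dimensional $H_{l-1}(L;\R)$.
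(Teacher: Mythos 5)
The paper does not actually prove this lemma: it quotes it from \cite{gm} and refers the reader to section $4$ of that paper, so there is no in-text proof to compare against. Evaluated on its own, your Step 1 is fine --- excision plus the local cone structure correctly reduces normality at $x$ to $\dim \tilde H_{l-1}(L)=1$ for $L=X\cap S_{n-1}(x;\ep)$.

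The argument genuinely fails at Steps 2 and 3, and it fails exactly on the phenomenon the lemma is meant to detect. Step 2 asserts that deleting the codimension-$\geq 2$ set $L_{sing}$ cannot disconnect $L$; that is true for manifolds but false for pseudomanifolds. Take $Y$ to be two $2$-spheres glued at a single point $p$ and let $X=\Sigma Y$ (realized subanalytically); at a suspension point $x$ the link $L$ is $Y$, which is connected, while $L\cap \xb=Y\setminus\{p\}$ has two components --- and indeed $H_3(X;X\setminus \{x\})\simeq \tilde H_2(Y)=\R^2$, so $X$ is not normal even though $L$ is connected. The same example refutes Step 3: $Y$ is a compact \emph{connected} $2$-dimensional pseudomanifold with $\dim H_2(Y;\R)=2$, so connectedness of $L$ does not give a one-dimensional top homology group. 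The correct mechanism is that a top cycle of $L$ is a locally constant (orientation-compatible) coefficient on each connected component of $L_{reg}$, and since $\dim L_{sing}\leq l-3$ there are no $(l-2)$-simplices in $L_{sing}$ to impose further relations; hence $\dim H_{l-1}(L;\R)$ counts the (orientable) components of $L_{reg}\simeq L(x;\xb)$ --- of the \emph{regular part} of the link, not of the link itself. This is why the statement is phrased in terms of $L(x;\xb)$. Finally, your orientability patch does not work as written: a positive-codimension submanifold of $\R^n$ inherits no orientation from the ambient space; one must instead argue (as in \cite{gm}, where pseudomanifolds are taken oriented) that $L(x;\xb)$ is the boundary of the oriented manifold-with-boundary $\xb\cap \overline{B_n(x;\ep)}$, and some such hypothesis is genuinely needed over $\R$ (the cone on a Klein bottle has connected link but $\tilde H_2=0$).
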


See for instance \cite{gm} section $4$ for a proof.  The very significant advantage of normal pseudomanifolds lies in the following proposition.

\begin{pro}\label{pro ih et x normal} \cite{gm}
Let $X$ be a normal topological  pseudomanifold. The mapping $\alpha: I
^{t}H_j(X) \to H_j(X)$, induced by the inclusion between the chain
complexes, is an isomorphism for all $j$.
\end{pro}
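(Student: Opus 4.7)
The strategy is to argue locally on $X$ and then assemble via Mayer--Vietoris. The reduction is to compute both sides on an open cone on the link, where normality (i.e.\ connectedness of the link, by the preceding lemma) is exactly what forces the intersection cone computation to match the ordinary one.

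First I would treat a neighborhood of a single point. Around any $x \in X$, Theorem \ref{thm_retraction} supplies a Lipschitz strong deformation retraction of a small subanalytic neighborhood $U_x$ onto $\{x\}$, so $H_j(U_x)=\R$ for $j=0$ and $0$ otherwise. Using the conical structure of subanalytic germs, one may identify $U_x$ with an open cone $cL_x$ on $L_x:=L(x;\xb)$ (for $x\in \xb$ this is just a Euclidean ball and $\alpha$ is trivially an isomorphism).

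Next I would run the maximal-perversity cone formula on $cL_x$ for $x\in X_{sing}$. Since any $(t,i)$-allowable chain must satisfy $\dim(|\xi|\cap X_{sing})<i-1$, a chain can meet the cone point only if $i\geq 2$, and its boundary only if $i\geq 3$; a direct computation gives $I^{t}H_j(cL_x)=0$ for $j\geq 1$ and $I^{t}H_0(cL_x) = I^{t}H_0(L_x)$. Now $I^{t}H_0(L_x)$ is generated by points of $(L_x)_{reg}$ modulo $(t,1)$-allowable $1$-boundaries; since $\dim (L_x)_{sing}\leq l-4$, the set $(L_x)_{reg}$ is connected as soon as $L_x$ is, and by the preceding lemma normality of $X$ delivers exactly this connectedness. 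Hence $I^{t}H_0(L_x)=\R$ and $\alpha:I^{t}H_\bullet(U_x)\to H_\bullet(U_x)$ is an isomorphism.

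Finally I would globalize. Cover $X$ by finitely many conical neighborhoods $U_1,\dots,U_N$ of the type above and induct on $N$. Both $H_\bullet$ and $I^{t}H_\bullet$ satisfy Mayer--Vietoris for open unions, and the natural map $\alpha$ intertwines the two long exact sequences. Intersections of conical neighborhoods again admit compatible Lipschitz deformation retractions via Theorem \ref{thm_retraction}, so the inductive hypothesis applies to them, and the five lemma yields the global isomorphism.

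The delicate point is the cone computation at the \emph{boundary} perversity $\bar t=(0,1,\dots,l-2)$: this is the borderline case in which the cone formula almost fails, and $I^{t}H_0(cL)=\R$ holds exactly under the connectedness condition characterizing normality. That is where the hypothesis is used in an essential way, and it is really the only nontrivial input beyond the local conical structure supplied by Theorem \ref{thm_retraction}.
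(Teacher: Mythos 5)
The paper itself offers no proof of this proposition: it is quoted from Goresky--MacPherson (\cite{gm}, section 4), so there is nothing internal to compare against. Your reconstruction follows the standard route (local cone computation plus globalization), and the local half is essentially correct: the allowability bookkeeping at the cone point for $\bar t=(0,1,\dots,l-2)$, the coning argument giving $I^tH_j(cL_x)=0$ for $j\geq 1$, and the identification $I^tH_0(cL_x)=H_0((L_x)_{reg})$, which normality (via the preceding lemma) forces to be $\R$. One intermediate justification is wrong, though: you claim that $(L_x)_{reg}$ is connected as soon as $L_x$ is, ``since $\dim(L_x)_{sing}\leq l-4$''. The codimension bound is $\dim(L_x)_{sing}\leq l-3$, and in any case connectedness of a pseudomanifold does not imply connectedness of its regular part (two spheres glued at a point). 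This does not sink the argument, because the lemma you cite gives the connectedness of $L(x;\xb)$ directly from normality of $X$ --- but the appeal to codimension should be deleted, since it is exactly the implication that fails for non-normal spaces and whose failure is the content of the proposition.

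The genuine gap is in the globalization. You assert that ``intersections of conical neighborhoods again admit compatible Lipschitz deformation retractions via Theorem \ref{thm_retraction}, so the inductive hypothesis applies to them.'' This is false: Theorem \ref{thm_retraction} retracts a small ball about a point onto that point, and the intersection $U_i\cap U_j$ of two such balls is in general neither contractible nor of the local conical form to which your cone computation applies (already two balls on a circle can intersect in two arcs). As written, the Mayer--Vietoris induction does not close. Two standard repairs: (a) sheafify, i.e.\ show that $U\mapsto I^tC^\bullet(U)$ and $U\mapsto C^\bullet(U)$ both give fine resolutions of $\R_X$ --- your local computation is exactly the required stalk statement --- and conclude that both compute $H^\bullet(X;\R)$; this is the mechanism the paper itself uses for Theorem \ref{thm de rham}. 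Or (b) run the induction over the open stars of a subanalytic triangulation, whose multiple intersections are open stars of simplices and hence distinguished neighborhoods of the form $\R^k\times cL$; but then the local computation must be carried out for $\R^k\times cL$ and not only for $cL$. Either way an additional argument is needed where you currently invoke Theorem \ref{thm_retraction}.
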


 \medskip

\subsection{Normalizations of pseudomanifolds.} We shall need some basic facts about normalizations. 

\begin{dfn}\label{dfn normalization} A {\bf normalization} of the topological pseudomanifold $X$ is a normal topological
pseudomanifold $\widetilde{X}$ together with a finite-to-one continuous mapping
 $\pi: \widetilde{X}\to X$ such that, for any $p$ in
$X$, $$\pi_*: \underset{q \in \pi^{-1}(p)}{\oplus}
H_l(\widetilde{X};\widetilde{X} \setminus q) \to H_l (X;X\setminus
p)$$ is an isomorphism.
\end{dfn}

We are going to see that normalizations are useful to compute intersection homology in the maximal perversity.

\begin{pro}\label{pro_Linfty_normalization}
Every  pseudomanifold  $X$ admits a normalization $\pi:\widetilde{X} \to X$. The mapping $\pi$ then induces a homeomorphism above the regular locus of $X$.
\end{pro}
\begin{proof}
We follow the construction of \cite{gm}. Consider a triangulation of $X$ (since $X$ is globally subanalytic, it admits a $C^0$ triangulation, see Appendix I), i.e., a homeomorphism  $T :K \to X$, with $K$ finite union of open simplices.

Let $L$ be the disjoint union of all  the closures in $K$ of the $l$-dimensional  open simplices of $K$ (where $l$ is the dimension of $X$).  Identify  the closure in $K$ of  two  $(l-1)$ open faces   of two elements
    of $L$ if these two faces coincide in $K$.  This provides a simplicial complex $\tilde{X}$. Denote then by $\pi:\tilde{X}\to X$ the  map induced by $T$.

Observe that by construction the mapping $\pi$ is a homeomorphism on the complement in $X$ of the $(l-2)$-skeleton. It is thus easily checked  from the definition that the mapping $\pi$ induces a homeomorphism above  $X_{reg}$ and that  $L(x,\tilde{X}^0_{reg})$  is connected at singular points.
\end{proof}

\begin{rem}
It is possible to see that  the normalization of a pseudomanifold is unique, up to a homeomorphism.
\end{rem}

\medskip

It is not difficult to see from their construction that
normalizations must identify $(t;i)$-allowable chains of
$\widetilde{X}$ with   $(t;i)$-allowable chains of $X$, which
implies  that they yield  an isomorphism between the intersection
homology groups (see \cite{gm}):

\begin{pro}\label{pro ih et normalization} \cite{gm}
Let  $\pi:\tilde{X} \to X$ be a  normalization of $X$. Then, for
any $j$ the induced map $\pi_*:I^{t}H_j (\tilde{X}) \to I^{t}H_j
(X)$ is an isomorphism.
\end{pro}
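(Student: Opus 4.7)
My strategy is to promote the remark that \emph{normalization identifies $(t;i)$-allowable chains on both sides} to an explicit chain-level isomorphism $\Lambda:I^tC_\bullet(X)\to I^tC_\bullet(\widetilde{X})$ inverse to $\pi_\ast$, from which the statement follows immediately by passing to homology. The key geometric fact I will exploit is that any normalization restricts to a homeomorphism $\widetilde{X}_{reg}\to X_{reg}$: indeed, the definition via the isomorphism on $\oplus_{q\in\pi^{-1}(p)}H_l(\widetilde{X};\widetilde{X}\setminus q)\to H_l(X;X\setminus p)$ forces $\pi^{-1}(X_{reg})\subseteq\widetilde{X}_{reg}$ and $\pi$ to be one-to-one there, while the finite-to-one and dimension-preserving properties give $\pi^{-1}(X_{sing})=\widetilde{X}_{sing}$.

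First, I would verify that $\pi_\ast$ preserves allowability. Given $\widetilde{\xi}\in I^tC_i(\widetilde{X})$, since $\pi$ is finite-to-one and subanalytic,
\[
\dim|\pi_\ast\widetilde{\xi}|\cap X_{sing}=\dim\pi(|\widetilde{\xi}|\cap\widetilde{X}_{sing})=\dim|\widetilde{\xi}|\cap\widetilde{X}_{sing}<i-1,
\]
and the same inequality at dimension $i-1$ for $\partial\widetilde{\xi}$. Hence $\pi_\ast:I^tC_\bullet(\widetilde{X})\to I^tC_\bullet(X)$ is a well-defined chain map.

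Next, I would construct $\Lambda$ by lifting each allowable subanalytic simplex. Fix $\sigma:T_i\to X$ appearing in an allowable chain $\xi$. Allowability gives $\dim\sigma^{-1}(X_{sing})\leq\dim|\sigma|\cap X_{sing}<i-1$, so $\sigma^{-1}(X_{sing})$ has codimension at least $2$ inside the simplex $T_i$; its complement $U\subset T_i$ is therefore open, path-connected and simply connected. Because $\pi|_{\widetilde{X}_{reg}}$ is a homeomorphism onto $X_{reg}$, there is a unique continuous lift $\widetilde{\sigma}_0:U\to\widetilde{X}_{reg}$ of $\sigma|_U$. Since $\pi$ is proper and finite, and $\widetilde{X}_{sing}$ is compact, $\widetilde{\sigma}_0$ extends continuously (and uniquely, by the connectedness of $U$ near each point of $T_i\setminus U$, which is ensured by the codimension $\geq 2$ hypothesis) to $\widetilde{\sigma}:T_i\to\widetilde{X}$. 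Subanalyticity of $\widetilde{\sigma}$ follows because its graph equals the closure in $T_i\times\widetilde{X}$ of the subanalytic graph of $\widetilde{\sigma}_0$, and globally subanalytic sets are closed under closure. Setting $\Lambda(\sigma):=\widetilde{\sigma}$ and extending linearly defines $\Lambda:I^tC_i(X)\to C_i(\widetilde{X})$; uniqueness of lifts shows that $\Lambda$ commutes with face maps, hence with $\partial$, and the preimage of $\widetilde{X}_{sing}$ inside $|\widetilde{\sigma}|$ lives in $\sigma^{-1}(X_{sing})$, so $\Lambda$ actually lands in $I^tC_\bullet(\widetilde{X})$.

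Finally, $\pi_\ast\circ\Lambda=\mathrm{id}$ by construction, while $\Lambda\circ\pi_\ast=\mathrm{id}$ because for any allowable $\widetilde{\sigma}$ both $\widetilde{\sigma}$ and $\Lambda(\pi\circ\widetilde{\sigma})$ are continuous lifts of $\pi\circ\widetilde{\sigma}$ agreeing on the open dense set $U$, hence everywhere. This exhibits $\pi_\ast$ as an isomorphism of chain complexes, yielding the isomorphism on $I^tH_j$. The delicate step — and the one I expect to be the main obstacle — is justifying the continuous subanalytic extension of $\widetilde{\sigma}_0$ across the small set $\sigma^{-1}(X_{sing})$: this is where the codimension $\geq 2$ allowability condition is crucial, since it guarantees that the finitely many branches of $\widetilde{X}$ over $X$ cannot be interchanged as one approaches the singular locus inside $T_i$.
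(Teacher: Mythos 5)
The paper offers no proof of this proposition --- it is quoted from \cite{gm}, with only the remark that ``normalizations must identify $(t;i)$-allowable chains'' --- so your attempt has to be judged on its own; and it rests on two identifications that are false in general, each of which breaks a key step. First, you use $\pi^{-1}(X_{sing})=\widetilde{X}_{sing}$ to prove that $\pi_\ast$ preserves allowability. Only the inclusion $\widetilde{X}_{sing}\subset\pi^{-1}(X_{sing})$ holds; the reverse fails already for the pinched sphere $X=S^2/(N\sim S)$, whose normalization is $\widetilde{X}=S^2$ with $\widetilde{X}_{sing}=\emptyset$. There every $1$-chain of $\widetilde{X}$ is $(t;1)$-allowable, but a $1$-chain through $N$ maps to a chain meeting $X_{sing}$ in dimension $0$, which is not $(t;1)$-allowable: with the intrinsic singular sets, $\pi_\ast$ does not even map $I^tC_\bullet(\widetilde{X})$ into $I^tC_\bullet(X)$. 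Second, in constructing $\Lambda$ you pass from $\dim(|\sigma|\cap X_{sing})<i-1$ to $\dim\sigma^{-1}(X_{sing})<i-1$; this is false for non-injective $\sigma$. In the same example take $\sigma:T_2\to X$ sending a circle $\gamma$ in the interior of $T_2$ to the pinch point, the inside of $\gamma$ onto one lobe and the outside onto the other: then $|\sigma|\cap X_{sing}$ is a point and $|\partial\sigma|$ avoids it, so $\sigma\in I^tC_2(X)$, yet $\sigma^{-1}(X_{sing})=\gamma$ has codimension one and the two components of its complement lift to the two different preimages of the pinch point --- no continuous lift $\widetilde{\sigma}$ exists and $\Lambda$ is undefined. (Your appeal to simple connectivity of $U$ is also incorrect --- removing a codimension-two set can create fundamental group --- but that is harmless, since $\pi$ is injective over $X_{reg}$ and no covering-space argument is needed there.)

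These are not repairable by small adjustments; they are exactly the point that makes the proposition nontrivial. The standard argument behind \cite{gm} equips $\widetilde{X}$ with the filtration pulled back from $X$ (i.e.\ declares its ``singular set'' to be $\pi^{-1}(X_{sing})$); since $\pi$ is finite-to-one and hence dimension-preserving, allowability is then literally identified on both sides and $\pi_\ast$ is an isomorphism of intersection chain complexes for that filtration. One must then invoke the independence of intersection homology from the choice of stratification to return to the intrinsic singular set of $\widetilde{X}$ --- an ingredient entirely absent from your plan. (For the maximal perversity one can alternatively route the argument through Proposition \ref{pro ih et x normal} applied to the normal space $\widetilde{X}$, but that again replaces your chain-level inverse by a homology-level comparison.)
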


\bigskip

\section{Computation of the $L^\infty$ cohomology groups}
This section  proves the main result of this
paper, Theorem \ref{thm de rham}.

\subsection{Weakly differentiable forms.} For technical reasons,
we will need to work with non smooth forms, which are  weakly
differentiable, i.e., differentiable as distributions. Therefore,
the first step is to prove that the bounded  weakly differentiable
forms give rise to the same cohomology theory. We will follow an argument similar to the one used by Youssin in \cite{y}.

Let $M$ be a smooth manifold.  We denote by $\Lambda_{0} ^{j}(M)$ the set of
$C^2$ $j$-forms on $M$ with
compact support.

\begin{dfn}
Let $U$ be an open subset of $\R^n$. A continuous differential
$j$-form $\alpha$ on $U$ is called {\bf weakly differentiable} if
there exists a continuous $(j+1)$-form $\omega$ such that for any form
$\varphi\in \Lambda_{0} ^{l-j-1} (U)$:$$\int_{U} \alpha \wedge
d\varphi =(-1)^{j+1}\int_{U} \omega \wedge \varphi.$$ The form
$\omega$ is then called {\bf the weak exterior differential of
$\alpha$} and we write $\omega=\overline{d} \alpha$.   A
continuous differential $j$-form $\alpha$ on $M$ is called {\bf
weakly differentiable} if  it  gives rise to   weakly
differentiable forms via the coordinate systems  of $M$.
\end{dfn}

We denote by $\overline{\Omega}_\infty ^{j}(M)$ the set of weakly
differentiable forms which are bounded and which have a  bounded
weak exterior differential. They constitute a cochain complex
whose coboundary operator is $\overline{d}$. We denote by
$\overline{H}_\infty ^\bullet (M)$ the resulting cohomology
groups.

It is well known that if $\omega$ is smooth then it is weakly differentiable and $d\omega=\overline{d}\omega$. Therefore $\Omega_\infty ^{j}(M)\subset \overline{\Omega}_\infty ^{j}(M)$. Moreover, every $L^\infty$ weakly differentiable form may be approximated (for the $L^\infty$ norm) by smooth bounded forms (with approximation of the differential if it is $L^\infty$). Consequently, any weakly differentiable $0$-form $\omega$ satisfying $\overline{d} \omega =0$ is constant.

We shall see that smooth and weakly differentiable forms give rise to isomorphic cohomology theories. The lemma below addresses the case of compact manifolds with boundary.

  Given a smooth manifold  with boundary $K$,  we write  $H_{dR} ^j (K)$ for the de Rham cohomology of $K$, i.e., the cohomology of the $C^\infty$ differential forms on $K$.

\begin{lem}\label{lem_K_avec_bord}Let $K$ be a compact manifold with boundary. The mapping $H_{dR} ^j (K) \to \overline{H}^j _\infty (K \setminus \partial  K)$ induced by  the inclusion between the respective cochain complexes is an isomorphism.   \end{lem}
\begin{proof}
As the smooth forms on $K$ satisfy Poincar\'e Lemma (see for instance \cite{bl}), they give rise to a fine torsionless resolution of the constant sheaf.  By the
uniqueness of the map between sheaf cohomology theories with
coefficient in sheaves of $\R$-modules, it is enough to show
Poincar\'e Lemma for weakly differentiable forms, i.e., it is enough to show that every point of $K$ has a contractible neighborhood $U$ in $K$ such that for any $\omega \in  \overline{\Omega} ^j _{\infty}(U\setminus \partial K) $, $j>0$,  there is $\alpha \in\overline{\Omega} ^{j-1} _{\infty}(U\setminus \partial K)$, such that $\overline{d} \alpha =\omega$.

 Poincar\'e  Lemma for $\overline{\Omega} ^j _{\infty}(K\setminus \partial K)$ may be either derived  by following the same argument as for the smooth forms on compact manifolds with boundary or directly deduced from the proof of Theorem \ref{thm x normal Poincare lemma} which actually applies to any weakly differentiable bounded $j$-form $\omega$  (this theorem indeed states a  more difficult result since it deals with every subanalytic set, possibly singular).
\end{proof}

 For noncompact manifolds, we can now prove the following:

\begin{pro}\label{pro l1 isom smooth}
For any $C^\infty$ manifold $M$ (without boundary), the inclusion $\Omega_{\infty}
^\bullet (M) \hookrightarrow \overline{\Omega}_{\infty} ^\bullet (M)$ induces
isomorphisms on the cohomology groups.
\end{pro}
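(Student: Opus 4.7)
The approach is a smoothing-by-mollification argument, along the lines of Youssin's treatment of the $L^p$ setting in \cite{y}. The goal is to construct, for each sufficiently small $\ep>0$, a linear smoothing operator
\[
S_\ep : \overline{\Omega}_\infty^\bullet(K) \to \Omega_\infty^\bullet(K)
\]
together with a bounded chain homotopy $H_\ep : \overline{\Omega}_\infty^\bullet(K) \to \overline{\Omega}_\infty^{\bullet-1}(K)$ satisfying the standard relation
\[
\mathrm{id} - \iota\circ S_\ep = \overline{d}\,H_\ep + H_\ep\,\overline{d},
\]
where $\iota$ denotes the inclusion of smooth forms into weakly differentiable ones, and likewise that $S_\ep\circ\iota$ is chain-homotopic to the identity on $\Omega_\infty^\bullet(K)$. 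Formally, this yields the asserted isomorphism on cohomology.

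The local model is the classical Friedrichs mollifier. On a Euclidean ball, convolution $\alpha\mapsto \alpha\ast \phi_\ep$ with a smooth bump $\phi_\ep$ of mass $1$ and support in the $\ep$-ball commutes with the weak exterior derivative, sends bounded forms to bounded smooth forms, and admits a standard chain homotopy $h_\ep$ (built from the deformation $\alpha_s = \alpha\ast \phi_{s\ep}$, $s\in[0,1]$) whose $L^\infty$ operator norm is of order $\ep$. Thus the proposition is trivial on a single chart.

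To globalize, I would choose a locally finite atlas $\{(U_i,\phi_i)\}$ of $K$ by coordinate charts onto Euclidean balls, arranged so that in each chart the Riemannian metric of $K$ is quasi-isometric to the flat metric in the chart (for instance, geodesic normal balls of uniformly small radius). I pick a subordinate $C^2$ partition of unity $\{\chi_i\}$ whose differentials are uniformly bounded in the charts. Then $S_\ep\alpha$ is defined by pushing each $\chi_i\alpha$ to the chart $\phi_i(U_i)$, applying the local mollifier, and summing; $H_\ep$ is assembled analogously from the local $h_\ep^{(i)}$. A direct computation then reduces the global chain-homotopy identity to the local ones, with the extra terms coming from the $d\chi_i$'s being absorbed in $H_\ep$.

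The main technical point — and the expected main obstacle — is to verify that $S_\ep\alpha$, $d S_\ep\alpha$ and $H_\ep\alpha$ stay uniformly bounded on $K$ in terms of $|\alpha|_\infty$. Local finiteness of the atlas ensures only finitely many summands contribute at any point of $K$; uniform control over the $\chi_i$'s, the mollifier kernels, and the quasi-isometric comparison of the Riemannian and Euclidean norms in each chart must then be combined to yield a pointwise bound of the form $C\,|\alpha|_\infty$. Once this uniform bound is secured, the chain-homotopy relation lifts from the local to the global setting, and one concludes that $\iota$ is a quasi-isomorphism of cochain complexes, proving that $\Omega_\infty^\bullet(K)\to \overline{\Omega}_\infty^\bullet(K)$ induces isomorphisms on cohomology.
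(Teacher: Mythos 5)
Your route is genuinely different from the paper's. You aim at a global regularization operator $S_\ep$ with a chain homotopy $H_\ep$, in the spirit of de Rham's smoothing of currents; the paper instead fixes a single form $\alpha$ with $\overline{d}\alpha$ smooth and corrects it step by step over an exhaustion $K_1\subset K_2\subset\dots$ by compact manifolds with boundary, adding at stage $i$ an exact form $\overline{d}\theta_i$ with $\mathrm{supp}\,\theta_i\subset Int(K_i)\setminus K_{i-2}$ and $|\theta_i|_\infty+|\overline{d}\theta_i|_\infty\leq 1$, so that $\theta=\sum\theta_i$ is a locally finite, bounded primitive and $\alpha+\overline{d}\theta$ is smooth. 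That argument only needs the compact case of the isomorphism plus local convolution, and it never has to produce a bounded operator on the whole complex; yours, if completed, would give the stronger statement that the inclusion is a quasi-isomorphism realized by explicit bounded operators.

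As written, though, there are two concrete gaps. First, the summed operator $S_\ep\alpha=\sum_i M_i^{(\ep)}(\chi_i\alpha)$ is \emph{not} a chain map and the homotopy identity does not close up: writing the local identity $M_i^{(\ep)}\beta-\beta=d h_i^{(\ep)}\beta+h_i^{(\ep)}\overline{d}\beta$ and summing, the terms $\overline{d}(\chi_i\alpha)=d\chi_i\wedge\alpha+\chi_i\overline{d}\alpha$ produce the residual operator $E_\ep\alpha=\sum_i h_i^{(\ep)}(d\chi_i\wedge\alpha)$ (and a corresponding defect in $dS_\ep\alpha-S_\ep\overline{d}\alpha$), which does not vanish because mollification does not commute with multiplication by $\chi_i$; so you only get $\mathrm{id}-S_\ep=\overline{d}H_\ep+H_\ep\overline{d}+E_\ep$ with $E_\ep$ small but of degree zero, and concluding from this requires an extra argument. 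The standard repair is to \emph{compose} the local regularizations, $S=\dots\circ R_2\circ R_1$ with each $R_i$ equal to the identity outside a chart, rather than to sum them. Second, your uniformity hypotheses (charts uniformly quasi-isometric to Euclidean balls, $|d\chi_i|$ uniformly bounded, uniformly locally finite atlas) amount to assuming $K$ has bounded geometry. The manifolds to which the proposition is applied in this paper are $X_{reg}$ and $U\cap X_{reg}$ for a singular pseudomanifold, whose induced metric can degenerate (cusps, vanishing injectivity radius), so such an atlas need not exist. The fix is that no uniform operator norm is needed: for a fixed $\alpha$ one may let the mollification parameters $\ep_i$ decay rapidly with $i$, keeping each local contribution summably small; but this must be said, since it is exactly the point where "locally bounded" could fail to give "bounded".
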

\begin{proof}
It is enough to show that, for any form $\alpha \in
\overline{\Omega}_{\infty} ^{j} (M)$  with $\overline{d}\alpha \in
\Omega_{\infty} ^{j+1} (M)$ (i. e. $\alpha$ is weakly differentiable and $\overline{d}\alpha$ is  {\it smooth}),
  there exists  $\theta \in \overline{\Omega}_{\infty} ^{j-1} (M)$ such that $(\alpha+\overline{d}\theta)$ is
  $C^\infty$ (if $j=0$ then $\theta\equiv 0$).

Choose a sequence of compact smooth manifolds
with boundary $K_i \subset M$, $i\in \N$, such that for each
 $i\geq 0$, $K_i$ is included in the interior of
$K_{i+1}$ and $\cup K_i=M$.

 Fix a form $\alpha \in \overline{\Omega}_{\infty}
^{j} (M)$  with $\overline{d}\alpha \in \Omega_{\infty} ^{j+1}
(M)$.   We are going to construct a sequence $(\theta_i)_{i \in \N}$ in $\overline{\Omega}_{\infty} ^{j-1}(M)$
  such that for every  $i \in \N$, we have
  $supp \; \theta_i \subset Int(K_{i}) \setminus K_{i-2}$ as well as $|\theta_i|_\infty+|\overline{d}\theta_i|_\infty\leq
  1$ and such that the form $\alpha_i:=\alpha+ \sum_{k=0} ^i\overline{d}\theta_k$ is smooth in a neighborhood of $K_{i-1}$.

  Before defining inductively the $\theta_i$'s, observe that $\theta:=\sum_{i=0}
  ^\infty  \theta_i$ is the desired form (this sum is locally
  finite).

We now define the
$\theta_i$'s by induction on $i$.
Let us assume that $\theta_0,\dots,\theta_{i-1}$ have been constructed, $i \geq 1$
(we may set $K_{-1}:=K_{-2}:=\emptyset$).  We will also argue by induction on $j$. For $j=-1$, both cochain complexes vanish and the result is clear.

 Observe that by Lemma \ref{lem_K_avec_bord},
there exists a smooth $j$-form $\beta$ on
$K_i$ such that $d\beta=\overline{d}\alpha_{i-1}$. It means
that $(\alpha_{i-1}-\beta)$
   is $\overline{d}$-closed, and thus again by Lemma \ref{lem_K_avec_bord} there is a smooth $j$-form $\beta'$ on  $K_i$ such that
   \begin{equation}\label{eq_def_gamma}
\alpha_{i-1}-\beta=\beta' +\overline{d} \gamma,
\end{equation} with $\gamma \in
   \overline{\Omega}^{(j-1)}_{\infty} (K_i)$.

 Thanks to the induction on $i$, we know that
     there exists an open  neighborhood $V$ of $K_{i-2}$ on which
     $\alpha_{i-1}$ is smooth. This implies that $\overline{d} \gamma$ is smooth on $V$.
      Therefore, applying the induction hypothesis to $\gamma$ (which is a $(j-1)$-form),  we can add a weakly exact
   form $\overline{d}\sigma$ to $\gamma$ to get a form smooth on a neighborhood of $K_{i-2}$. Multiplying $\sigma$ by a smooth function which has  compact support included in $V$ and  which is $1$  on a neighborhood $W\subset V$ of $K_{i-2}$, we get a form $\sigma'$ on $M$ such that $(\overline{d}\sigma'+\gamma)$ is smooth on $W$.  It means that
   we can assume that $\gamma$ is smooth on an open neighborhood $W$ of $K_{i-2}$. We will assume
   this fact without changing notations.

    By means of convolution products with a bump function, for any
    $\ep>0$,
     we may construct a smooth form  $\gamma_\varepsilon$ such that $|\gamma_\varepsilon-\gamma|_\infty\leq \varepsilon$ and
     $|d\gamma_\varepsilon-\overline{d}\gamma|_\infty\leq
     \varepsilon$ on $K_i$.

     Consider a smooth function $\phi$ which is $1$ on a neighborhood of
     $(M \setminus W)\cap K_{i-1}$ and with support in $(K_i \setminus \partial K_i) \setminus
     K_{i-2}$. Then, set:
\begin{equation}\label{eq_def_theta_i}
\theta_i(x):=\phi(x)(\gamma_\ep-\gamma)(x).
\end{equation}

 If
$\varepsilon$ is chosen small enough $|\theta_i|_\infty+|\overline{d}\theta_i|_\infty \leq 1$. On  a
neighborhood of
     $(M \setminus W)\cap K_{i-1}$, because $\phi\equiv 1$,
we have by (\ref{eq_def_gamma}) and (\ref{eq_def_theta_i}):
$\alpha_{i-1}+\overline{d}\theta_i=\beta+\beta'+d\gamma_\varepsilon$
which is clearly smooth. The form
$(\alpha_{i-1}+\overline{d}\theta_i)$ is smooth on $W$ as well,
since
     $\alpha_{i-1}$ and $\theta_i$ are both smooth.
      \end{proof}

\subsection{Proof of the De Rham Theorem for $L^\infty$ cohomology.}
We are now ready to prove Poincar\'e Lemma for $L^\infty$ cohomology.

\begin{thm}\label{thm x normal Poincare lemma}(Poincar\'e Lemma for $L^\infty$ cohomology)
Let $X \subset \R^{n}$ be locally closed and let $x_0 \in X$. There exists $\ep>0$ such that  for  any closed form $\omega\in
\Omega_\infty ^j(B^n(x_0,\ep) \cap \xb)$, $j\geq 1$, we can find
$\alpha \in \Omega_\infty ^{j-1}(B^n(x_0,\ep) \cap X_{reg})$,  such that
$\omega=d\alpha$.
\end{thm}
\begin{proof}
Let $r:X\cap B^n(x_0,\ep)\times [0;1] \to X \cap B^n(x_0,\ep)$ be the map obtained by applying
 Theorem \ref{thm_retraction} to $X$.
 For simplicity, as our problem is local, we will identify  $X$ with  the subset $X \cap B^n(x_0;\ep)$.
 Let $\omega \in
\Omega_\infty ^j(X_{reg})$, with $j\geq 1$ and $d \omega =0$. By Proposition \ref{pro l1 isom smooth}, it is enough to find $\alpha \in \overline{\Omega}_\infty ^{j-1}(X_{reg})$ satisfying $\overline{d} \alpha =\omega$.

The problem is that $ r$ may fail to be weakly smooth.
To overcome  this difficulty, we shall work with an approximation
of $r$. We need to be particular since we wish to preserve the
property that the derivative of $r_t$ goes to zero (pointwise and
generically) as $t$ goes to zero.

 Consider a sequence of compact subsets, $(K_i)_{i\in \N}$,  such that  $\cup K_i=X_{reg}\times (0;1)$ and  $K_i \subset Int(K_{i+1})$, for any $i$. Let $Y$ be the set of points of $X_{reg}\times (0;1)$ at which $r$ fails to be smooth. Define then  a sequence of compact subsets for $i \ge 1$: $$L_i:={\{q \in K_i:d(q;Y)}\geq 1/i\}.$$   
Define also $$X':=cl(Y)\cap (X_{reg}\times \{0\})$$  and observe that, since $Y$ is of positive codimension in $X_{reg}\times (0,1)$,  $X'$ is of positive codimension in $X_{reg}$ (we will consider it as a subset of $X_{reg}$).

 As $r$ is continuous, we may choose, for a given  $\ep_i>0$, a $C^\infty$ approximation $r_i$ (not necessarily subanalytic) of $r$ on $K_i$ satisfying for any $x$ in this set:
$$|r_t(x)-r_{i,t}(x)| \leq \ep_i.$$  Furthermore, as $r$ is smooth on $L_i $,  we may require that on this set  \begin{equation}\label{eq_d_xr_et_d_xrprime}
|d_xr_{i,t}-d_xr_t|\leq \ep_i.
\end{equation}
Moreover, as the first derivative of $r$ is bounded, the first  derivative of  $r_i$ may  be assumed to be bounded as well.

 Let $(\varphi_i)_{i \in \N}$ be a partition of unity subordinated to the covering $(Int(K_{i+2})\setminus K_{i})_{i\in \N} $ of $X_{reg}\times (0;1)$. Set $r':=\sum \varphi_i r_i$.  If the sequence $\ep_i$ is decreasing fast enough, then a straightforward computation shows that the first derivative of $ r'$ is  bounded above.

Furthermore, given any positive continuous function $\ep
:X_{reg}\times(0;1) \to \R$, we can have if the sequence $\ep_i$
is decreasing fast enough:
\begin{equation}\label{eq_approx}|r'_t(x)-r_t(x)|\leq \ep(x;t).\end{equation}
Finally, we shall check that $d_x r'_{t}$ tends to zero as $t$ goes to zero for $x \notin X'$. Fix $x$ in $X_{reg}\setminus X'$.  There exists $a>0$ such that $\{x\}\times  [0;a]$ does not meet $Y$. It means that for any $i$ large enough \begin{equation}\label{eq_K_et_L}
K_i\cap (\{x\}\times  [0;a])=L_i\cap (\{x\}\times  [0;a]).
\end{equation}
For $t$ small enough, if $\varphi_i(x;t)\neq 0$ then $(x;t) \in K_{i+2}$ and thus  by (\ref{eq_K_et_L}) belongs to $L_{i+2}$. By  (\ref{eq_d_xr_et_d_xrprime}), this implies that if  $ \ep_i$ tends to zero fast enough, $\lim_{t \to 0} |d_x r'_t|=0$ for  every $x \in X_{reg}\setminus X'$. We also see  for the same reason  that $d_x r'_t$ tends to the identity as $t$ goes to $1$ (for almost every $x$).

Let $\pi:W \to X_{reg}$  be a retraction where $W$ is a tubular
neighborhood of $X_{reg}$. Taking $W$  small enough, we may assume
that $\pi$ has bounded first partial derivatives. 
By (\ref{eq_approx}), $r'_t(x)$ belongs to $W$ if the function  $\ep$ is decreasing fast enough. Hence, composing $r'$ with $\pi$ if necessary we may assume that $r'$ preserves $X_{reg}$.  We will assume this without changing notations.

Define  two $L^\infty$ forms $\omega_1$ and $\omega_2$ on $X_{reg} \times (0;1]$ by:
$$r'^*\omega:=\omega_1+dt\wedge \omega_2.$$
Now, we may set:
$$\alpha(x):=\int_0 ^1 \omega_2 (x;t) dt.$$
As $\omega$ is $L^\infty$ and $r'$ has bounded first partial derivatives, the
form $\alpha$ is clearly bounded. By Lebesgue's dominated
convergence theorem, it is continuous. We claim that
it is weakly
differentiable and that $\overline{d}\alpha=\omega$.

Let us fix a  $C^2$-form $\varphi \in \Lambda^{m-j} _0 (\xb)$ with
compact support. We have, by definition of $\alpha$:
\begin{equation}\label{eq_der_faible_calcul}\int_{\xb}
\alpha \wedge d\varphi=
  \int_{\xb}   \int_0 ^1 \omega_{2}\wedge d\varphi = \lim_{t \to 0} \int_{\xb \times [t;1]}
r'^*\omega  \wedge d\varphi  .\end{equation}

As $r'^*\omega$ is closed, by Stokes' formula we
have:
$$ \int_{\xb \times [t;1]}
 r'^*\omega  \wedge d\varphi =(-1)^j \int_{x\in\xb }\omega(x) \wedge
\varphi(x) - (-1)^j\int_{x\in \xb }\omega_1(x;t) \wedge \varphi(x),$$
since $\lim_{t\to 1}r'^*\omega(x;t)=\omega(x)$ for any $x \in \xb$. Recall
that $d_xr'_t$ tends to
 zero as $t$ goes to $0$ for almost every $x$. This implies that $\omega_1(x;t)$ goes to zero as $t$ goes to
 $0$. Hence, passing to the limit we get:
 $$\int_{\xb} \alpha \wedge
d\varphi=(-1)^{j}\int_{\xb} \omega \wedge \varphi,$$ as required.
\end{proof}

\medskip

\begin{proof}[Proof of Theorem \ref{thm de rham}]
Let $\pi :\tilde{X}\to X$ be a normalization of $X$ (see Proposition \ref{pro_Linfty_normalization}).
 Let us define a presheaf  on $\tilde{X}$ by
$$\tilde{\Omega}_\infty ^j(U):=\Omega_\infty ^j(\pi(U)\cap
X_{reg}),$$ for every open set $U$ of $\tilde{X}$ ($\pi$ is a homeomorphism above $X_{reg}$). For every $j$, this
presheaf   immediately gives rise to a sheaf that we will denote
by $\mathcal{F}_\infty^j$. We will write  $\mathcal{F}_{\infty,x_0} ^j $  for the stalk of $\mathcal{F}_\infty ^j$ at $x_0 \in \tilde{X}$, i.e., the vector space obtained after identifying two sections which coincide near $x_0$.   As $\tilde{X}$ is compact, any global section
of $\mathcal{F}_\infty^j$ is bounded, so
that, since $\pi$ induces a homeomorphism above $X_{reg}$:\begin{equation}\label{eq_sect_globales}
\mathcal{F}^\bullet _\infty (\tilde{X})\simeq
\Omega_\infty ^\bullet (X_{reg}),
\end{equation}
 as cochain
complexes.

We denote by $\R_X$ the constant sheaf on $X$.
Let $x_0 \in \tilde{X}$ and set $U^\ep:=B^n(x_0,\ep)\cap \tilde{X}$.
 As $\pi$  is a  normalization,
$\pi(U^\ep)\cap X_{reg}$ is connected,
which means that $H^0 _\infty(X_{reg}\cap
\pi(U^\ep))=\R$, for $\ep>0$ small enough.

Moreover, by Theorem \ref{thm x normal Poincare lemma}, for $j>0$,  the   germ at $\pi(x_0)$ of a smooth bounded closed $j$-form $\omega$ on $\pi(U^\ep) \cap \xb$ is the exterior differential of  the germ  of a form $\alpha \in \mathcal{F}_{\infty,x_0} ^{j-1}$.
Therefore, the
sequence:
\begin{equation}\label{eq fine res covan}  0\longrightarrow \R_X \overset{d}{\longrightarrow} \mathcal{F}^0 _\infty  \overset{d}{\longrightarrow} \mathcal{F}^1_\infty \overset{d}{\longrightarrow}\dots \end{equation}
is a fine torsionless resolution of the constant sheaf. By
classical arguments of sheaf theory (see for instance \cite{w}),
the latter exact sequence of sheaves implies via (\ref{eq_sect_globales}) that  $H_\infty ^j(X_{reg})$ is isomorphic to the  singular
cohomology of $\tilde{X}$. But then, by Propositions \ref{pro ih et x
normal} and \ref{pro ih et normalization}, we get:\begin{equation}\label{eq proof isom de
rham}H_\infty ^j(\xb)\simeq H^j(\tilde{X}) \simeq I^{t}H
^j(\tilde{X})\simeq I^t H^j(X).  \end{equation} \end{proof}

\subsection{Integration on subanalytic singular simplices.}  We are going to prove that the isomorphism is provided by integrating the forms on the allowable
 chains.  We first check that integration gives rise to a well defined cochain map.
  This may be done because we restrict ourselves to the  $t$-allowable {\it subanalytic} singular cochains.
Let $X$ be a compact  pseudomanifold.


Let $L\subset X_{reg}$ be an oriented    manifold of dimension $j$
with $cl(L)$ $(t;j)$-allowable i. e.: $$dim \,cl(L) \cap X_{sing}\leq(j-2).$$

Set $\partial  L := cl(L)\setminus L$.  Then, for any given
$\omega$ in $\Omega_\infty^{j-1} ( X_{reg})$, $\int_L  d\omega$
and $\int_{(\partial L)_{reg}} \omega$ are well defined since
$\omega$ is continuous almost everywhere on $(\partial L)_{reg}$ and
bounded. We start by recalling a version of Stokes' formula proved
by \L ojasiewicz in \cite{l} who generalized a formula of Paw\l ucki \cite{pawluckistokes}.

\begin{lem}\label{lem lojasiewicz} \cite{l} Take $L$ and $\omega$ as in the above paragraph. Then: \begin{equation}\label{eq stokes l}
\int_{(\partial L)_{reg}} \omega=\int_L d\omega.\end{equation}\end{lem}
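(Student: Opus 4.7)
The plan is to reduce the claim to the classical Stokes' formula on a manifold with piecewise $C^1$ boundary by excising a shrinking subanalytic tubular neighborhood of the \emph{bad set}
$$Z := (cl(L)\cap X_{sing}) \cup (\partial L \setminus (\partial L)_{reg}),$$
and to control the three resulting error terms using the $L^\infty$ bounds on $\omega$ and $d\omega$ together with the codimension of $Z$ in $cl(L)$.

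First I would fix a Whitney stratification of $cl(L)$ in which $Z$ is a union of strata. The $(t;j)$-allowability of $cl(L)$ yields $\dim(cl(L)\cap X_{sing}) \leq j-2$, and the singular locus of the $(j-1)$-dimensional subanalytic set $\partial L$ is a proper subanalytic subset of it, hence of dimension $\leq j-2$; thus $\dim Z \leq j-2$. For small $\epsilon > 0$, set $U_\epsilon := \{x \in cl(L) : d(x,Z) < \epsilon\}$. By the subanalytic Sard lemma, for a generic sequence $\epsilon \to 0$ the set $L_\epsilon := L \setminus \overline{U_\epsilon}$ is a subanalytic manifold with piecewise $C^1$ boundary, and $\omega$, $d\omega$ are smooth in a neighborhood of $L_\epsilon$. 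Classical Stokes' formula then gives
$$\int_{L_\epsilon} d\omega \;=\; \int_{(\partial L)_{reg}\setminus \overline{U_\epsilon}} \omega \;+\; \int_{L \cap \partial U_\epsilon} \omega.$$

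Next I would pass to the limit as $\epsilon \to 0$. Since $d\omega$ is bounded and $L \setminus L_\epsilon$ is contained in the $\epsilon$-tube of $Z$, whose $j$-volume is $O(\epsilon^{j-\dim Z}) = O(\epsilon^2)$, the left-hand side converges to $\int_L d\omega$. The first boundary integral converges to $\int_{(\partial L)_{reg}} \omega$ by the same reasoning applied to $\omega$, noting that the removed part of $(\partial L)_{reg}$ is contained in an $\epsilon$-tube of a subanalytic set of codimension $\geq 1$ in $(\partial L)_{reg}$. The last boundary integral satisfies
$$\left|\int_{L \cap \partial U_\epsilon} \omega\right| \;\leq\; |\omega|_\infty \cdot \mathcal{H}^{j-1}(L \cap \partial U_\epsilon),$$
and I claim that $\mathcal{H}^{j-1}(L \cap \partial U_\epsilon) = O(\epsilon^{j-1-\dim Z}) = O(\epsilon)$, which tends to zero.

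The main obstacle is this last volume estimate for the subanalytic ``distance sphere'' around $Z$, which is the essential subanalytic input in \L ojasiewicz's argument. I would prove it by choosing a cylindrical cell decomposition of $\R^n$ adapted to $cl(L)$ and to $Z$: over each cell, the function $d(\cdot, Z)$ admits, via the preparation theorem (Proposition \ref{lem function eq aux distances} of this paper), an expression equivalent to a product of powers of distance functions, so the slice $\{d(\cdot,Z)=\epsilon\}\cap L$ becomes a finite union of graphs whose $(j-1)$-volume can be computed by Fubini cell by cell. The codimension hypothesis $\dim Z \leq j-2$ is exactly what ensures the resulting bound is $O(\epsilon)$ rather than $O(1)$; without it, the boundary integral need not vanish and the formula would fail.
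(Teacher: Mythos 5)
You should first know that the paper contains no proof of this lemma to compare against: it is quoted directly from \L ojasiewicz \cite{l}, and the only argument the paper adds is the one-sentence remark that \L ojasiewicz's hypotheses (bounded \emph{subanalytic} forms) may be relaxed to bounded forms extending continuously almost everywhere to $cl(L)$, which is what $(t;j)$-allowability guarantees here. So your argument is a reconstruction from scratch rather than a variant of anything in the paper, and its skeleton --- excise a shrinking tube around the bad set $Z$, apply classical Stokes, kill the error terms using $\dim Z\leq j-2$ --- is indeed the standard skeleton, with the allowability hypothesis entering exactly where you put it.

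There is, however, a genuine gap at the step where classical Stokes on $L_\epsilon$ is claimed to produce the boundary term $\int_{(\partial L)_{reg}\setminus \overline{U_\epsilon}}\omega$. Knowing that $\partial L$ is a $C^\infty$ manifold near a point $p$ --- which is all that membership in $(\partial L)_{reg}$ records --- does not make $cl(L)$ a manifold with boundary near $p$: the leaf $L$ may approach $p$ along several sheets (``wings'') attached to $\partial L$, and the manifold-boundary of $L_\epsilon$ then covers $(\partial L)_{reg}$ with a multiplicity equal to the signed number of wings. (Take $L$ the open unit disk minus a diameter: $\int_L d\omega=\int_{S^1}\omega$, whereas $\int_{(\partial L)_{reg}}\omega$ also sees the diameter; the two wings cancel only if the boundary integral is taken with induced multiplicities.) The assertion that, outside a subanalytic subset of $\partial L$ of dimension $\leq j-2$, the closure $cl(L)$ really is a $C^1$ manifold with boundary --- so that the multiplicity is generically one and your identity for $\partial L_\epsilon$ holds --- is Paw\l ucki's theorem, which is precisely the content of the reference being cited; it does not follow from Sard's lemma plus genericity of $\epsilon$, and it is the actual mathematical substance of the subanalytic Stokes formula. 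By contrast, the volume estimate you single out as ``the main obstacle'' is the routine part: the uniform local bound $\mathcal{H}^j(L\cap B_n(x;r))\leq Cr^j$ for bounded subanalytic $L$, together with a covering of $Z$ by $O(\epsilon^{-\dim Z})$ balls, gives $\mathcal{H}^j(L\cap U_\epsilon)=O(\epsilon^{j-\dim Z})$, and the coarea formula then yields a sequence $\epsilon_i\to 0$ along which $\mathcal{H}^{j-1}(L\cap\partial U_{\epsilon_i})=O(\epsilon_i^{\,j-1-\dim Z})$, which is all the limit argument needs; no preparation-theorem analysis of the distance function is required.
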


\L ojasiewicz's formula is actually devoted to bounded  subanalytic forms, but the required property is indeed that they are bounded and  extend continuously almost everywhere on the closure of the manifold $L$, which obviously holds true  when the form is $L^\infty$ and $cl(L)$ is $(t;j)$-allowable.

Next we turn to see that integration is well defined for any $(t;j)$ allowable  {\it subanalytic singular simplex}. Let  $\sigma :\Delta_{j} \to X$ be an oriented $(t;j)$-allowable (subanalytic) simplex.  
Denote by $\sigma_{reg}$ the set of points in
$\sigma^{-1}(X_{reg})$  near which $\sigma$ induces a smooth mapping.  Observe that the complement of $\sigma_{reg}$ in $\Delta_j$ has Lebesgue measure zero. Hence, it makes sense to set for  $\omega \in
\Omega_\infty^{j}(X_{reg})$:\begin{equation}\label{eq_def_integrale_simplex}
\int_{\sigma}\omega:=\int_{\Delta_j} \sigma^*\omega=\int_{\sigma_{reg}}  \sigma^*\omega.
\end{equation}

Stokes' formula continues to hold for subanalytic singular $(t;j)$-allowable simplices:
\begin{lem}
Let  $\sigma \in I^tC_j(X)$ and $\omega \in
\Omega_\infty^{j-1}(X_{reg})$.  Then the integral defined in (\ref{eq_def_integrale_simplex}) is finite and:
\begin{equation}\label{eq_stokes_lojasiewicz}
\int_\sigma d\omega =\int_{\partial \sigma} \omega.
\end{equation}
\end{lem}
\begin{proof}
Let $$\Gamma:=\{(x;y) \in X \times \Delta_j : x= \sigma(y)\},$$
and consider a cell decomposition of $\R^{n+j}$ compatible with
$\Gamma$. Refining it, we can assume that the boundary of a cell
is a union of cells.  For simplicity, we will identify
 $\Delta_j$ with $\Gamma$ and assume that $\sigma$ is the canonical projection (restricted to  $\Gamma$).

  Let $C\subset \Gamma$ be a cell of this cell decomposition and let $i:=\dim \,C$. Observe that  either $\sigma_{|C}$ is a diffeomorphism or $\dim \sigma(C)<i$.
   In the former case, if we endow $\sigma(C)$ with the
    orientation induced by  $\Delta_j$ via $\sigma$, we have by definition:
\begin{equation}\label{eq cell et pull back}
\int_C \sigma^*\alpha=\int_{\sigma(C)} \alpha.  \end{equation}
for any $\alpha \in \Omega^i _\infty(X_{reg})$ (we shall need both  the cases $\alpha=\omega$ and $\alpha=d\omega$). If $\dim
\,\sigma(C)<i$, then both vanish and this remains true. Note that, as a cell decomposition is a finite partition, the latter formula already shows that the integral defined in (\ref{eq_def_integrale_simplex}) is finite.

By Lemma \ref{lem lojasiewicz}, if $C$ is a cell of dimension $j$:  \begin{equation}\label{eq_stokes_cell}
\int _{C } \sigma^*d\omega\overset{(\ref{eq cell et pull back})}{=}\int_{\sigma(
C)} d\omega\overset{(\ref{eq stokes l})}{=}\int_{\partial(\sigma(
C))}\omega= \int_{\sigma(\partial
C)}\omega\overset{(\ref{eq cell et pull back})}{=}\int_{\partial C }  \sigma^*\omega ,
\end{equation}
the third equality being true because $\sigma$ is identified with a linear mapping (a projection) on the cell $C$. As $\Delta_j$ is a union of cells,  the latter equality still holds  if we replace $C$ with $\Delta_j$. We conclude that for relevant orientations:
$$\int _{ \sigma } d\omega=\int _{\Delta_j } \sigma^*d\omega\overset{(\ref{eq_stokes_cell})}{=}\int _{\partial \Delta _j} \sigma^*\omega=\int _{\partial \sigma } \omega. $$
\end{proof}

In conclusion, we get that the isomorphism of  Theorem \ref{thm de
rham} is given  by  integrating the differential forms on the  simplices:

\begin{thm}
Let $X$ be a compact  pseudomanifold.  The cochain maps
\begin{eqnarray*}\psi_X ^j:\Omega_\infty ^j(X_{reg}) &\to& I^tC^j (X),\\
\omega  &\mapsto& [\sigma   \mapsto \int_\sigma \omega],
\end{eqnarray*} induce isomorphisms between the cohomology groups.
\end{thm}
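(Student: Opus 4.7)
My plan is to realize the abstract isomorphism of Theorem \ref{thm de rham} as the explicit map $\psi_X$ via a parallel sheaf-theoretic argument. I would first reduce to the case where $X$ is normal: given a $C^2$ $L^\infty$ normalization $\pi: \widetilde{X} \to X$, its restriction $\pi'$ to the regular part is a diffeomorphism and quasi-isometry, so $\pi'^*$ induces isomorphisms on $L^\infty$ cohomology; Proposition \ref{pro ih et normalization} gives the corresponding statement for intersection cohomology; and since subanalytic $t$-allowable singular simplices lift through $\pi$ (and $\pi'^*$ commutes with integration), the map $\psi_X$ is conjugate to $\psi_{\widetilde X}$. Hence one may assume $X$ is normal.

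Next, I would define a sheaf complex $\mathcal{I}^\bullet$ on $X$ as the sheafification of the presheaf $U \mapsto I^tC^\bullet(U)$, where $I^tC^\bullet(U)$ denotes the $t$-allowable subanalytic singular cochain complex of $U$. I claim $\mathcal{I}^\bullet$ is a fine torsionless resolution of $\R_X$. Fineness follows from a partition-of-unity argument at the level of the simplex index set. Exactness in degree zero is immediate from normality: near any point there is a unique connected component of $X_{reg}$. Exactness in positive degrees is a local Poincar\'e lemma for intersection cohomology; I would establish it using the Lipschitz deformation retraction $r$ of Theorem \ref{thm_retraction} applied to $X$ itself, which preserves both $X_{reg}$ and $X_{sing}$, hence sends $t$-allowable chains to $t$-allowable chains. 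The associated prism operator $P: I^tC_j(U) \to I^tC_{j+1}(U)$ satisfies $\partial P + P \partial = \mathrm{id} - \iota_0$ on subanalytic $t$-allowable chains in a small neighborhood $U$, yielding $I^tH^j(U) = 0$ for $j\geq 1$ and for a cofinal system of neighborhoods of any point.

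With these preliminaries, the formula $\omega \mapsto [\sigma \mapsto \int_\sigma \omega]$ is a cochain map at the presheaf level by the version of Stokes' formula established just before the statement; it is compatible with the augmentations $\R_X \hookrightarrow \mathcal{F}_\infty^0$ and $\R_X \hookrightarrow \mathcal{I}^0$, since the constant $0$-form $1$ integrates to $1$ on any oriented $0$-simplex (orientability of $X_{reg}$ is used here to make integration globally well-defined with consistent signs). Sheafifying gives a morphism $\Psi^\bullet: \mathcal{F}_\infty^\bullet \to \mathcal{I}^\bullet$ of fine resolutions of $\R_X$. By the standard resolution argument invoked in the proof of Theorem \ref{thm de rham}, $\Psi^\bullet$ induces an isomorphism on sheaf cohomology; evaluating on the compact $X$ yields that $\psi_X$ is a quasi-isomorphism.

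The main obstacle I anticipate is carrying out the intersection-cohomology Poincar\'e lemma while remaining in the subanalytic category. Unlike the straight-line homotopy used in the classical chain-level proof, the relevant contraction must be the subanalytic Lipschitz retraction of Theorem \ref{thm_retraction}, and one must verify that the resulting prism chains are themselves subanalytic and $t$-allowable, and that the sheafification is genuinely fine. Preservation of allowability follows cleanly because $r$ preserves $X_{reg}$; the subanalyticity and fineness require the local geometric control already developed in Section 2, which is precisely why Theorem \ref{thm_retraction} was formulated at that level of generality.
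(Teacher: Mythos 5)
Your argument is correct and follows essentially the same route as the paper: both realize $\psi_X$ as a morphism between fine resolutions of $\R_X$ (the sheaf of locally bounded weakly differentiable forms on one side, the sheafified allowable subanalytic cochains on the other) and conclude by the uniqueness of maps between sheaf cohomology theories with coefficients in sheaves of $\R$-modules, so that $\psi_X$ must coincide with the isomorphism already produced in the proof of Theorem \ref{thm de rham}. The only divergence is one of detail: where the paper's two-line proof leans on the normalization step and Proposition \ref{pro ih et x normal} to know that the target resolution is locally acyclic, you establish this directly with a prism operator built from the retraction of Theorem \ref{thm_retraction}, correctly observing that preservation of $X_{reg}$ forces the prism of a $(t;j)$-allowable chain to remain allowable.
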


To prove it, observe that the cochain map $\psi_X$ induces   a
sheaf homomorphism (recall that the $L^\infty$ forms give rise to
sheaf on the normalization of $X$, see the proof of Theorem \ref{thm de rham}). By the
uniqueness of the map between sheaf cohomology theories with
coefficient in sheaves of $\R$-modules, this map must coincide
with the isomorphism (\ref{eq proof isom de rham}).

%

\begin{rems}
Theorem \ref{thm de rham} still holds if $X$ is a
pseudomanifold  with boundary \cite{gm} (indeed, our Poincar\'e Lemma for $L^\infty$ cohomology does not assume that $X$ is a pseudomanifold). The relative version is then true
as well, by the five lemma. Again, the isomorphism is provided by
integration of forms on allowable chains.

It is worthy of notice that the arguments of the proof of Theorem
\ref{thm de rham} also apply in the noncompact case, establishing
an isomorphism between the cohomology of the {\it locally} bounded forms (locally in $X$, not in $X_{reg}$) and
intersection homology in the maximal perversity.

The results of this paper remain true if we replace the subanalytic category with a polynomially bounded o-minimal structure \cite{driesmiller}. We need the structure to be polynomially bounded for we made use of the preparation theorem for proving Proposition \ref{lem function eq  aux distances}. It is unclear (but not impossible) whether the results of this paper, especially Theorem \ref{thm_retraction}, are valid on a non polynomially bounded o-minimal structure, especially for the $\log-exp$ sets, on which a generalized preparation theorem holds \cite{lr}.
\end{rems}

\section{Appendix I: globally subanalytic sets}
\subsection{Basic definitions}Let $N$ be an analytic manifold.
Recall that a subset $E\subset N$ is called {\bf semianalytic} if it is locally
defined by finitely many real analytic equations and inequalities. More precisely, for each $p \in   N$, there is
a neighborhood $U$ of $p$ in $N$, and real analytic  functions $f_i, g_{ij}$ on $U$, where $i = 1, \dots, r,\, j = 1, \dots , s$, such that
$$E \cap   U = \bigcup _{i=1}^r\bigcap _{j=1} ^s \{x \in U : g_{ij}(x) > 0 \mbox{ and } f_i(x) = 0\}.$$

 A subset $E\subset N$  is {\bf subanalytic}  if it
 can be locally represented as the projection of a semianalytic set. More precisely, for every $p \in  N$,
there exist a neighborhood $U$ of $p$ in $N$, an analytic manifold $P$, and a relatively compact semianalytic
set $Z \subset   N\times  P$ such that $E \cap U = \pi(Z)$, where $\pi :    N \times  P\to    N $ is the natural projection. In particular,
semianalytic sets are subanalytic.

 A subset of $\R^n$ is  {\bf globally subanalytic} if it coincides with a subanalytic subset of $\Pp^n$ after identifying $\R^n$ with and open subset of $\Pp^n$ via:
 $$(y_1, \dots, y_n) \to  (1 : y_1 :\dots: y_n) : \R^n  \to \Pp ^n.$$
We will denote by $\St_n$ the set of  globally subanalytic subsets of $\R^n$.

Clearly,  a bounded subset is subanalytic if and only if it is globally subanalytic.   We say that {\bf a  function is globally subanalytic} if its graph is globally subanalytic.

\subsection{Basic properties of globally subanalytic sets}\label{sect_basic_prop}
Any real algebraic set is globally subanalytic. Furthermore, globally subanalytic sets have the following very useful properties (see \cite{driesmiller}):
 \begin{enumerate}
 \item $\St_n$  is stable under unions, intersections and complement.
 \item If $A \in \St_m$ and $B\in \St_n$ then $A \times B \in \St_{m+n}$.
 \item If $\pi : \R^{n+1} ÔøΩ\to  \R^n$ is the projection on the first $n$ coordinates and $A\in  \St_{n+1}$, then $\pi (A)\in \St_n$.
 \item The elements of $\St_1$ are precisely the finite unions of points and intervals.
\end{enumerate}

  When  a family of sets has these properties, we say that it constitutes an {\it o-minimal structure}. These properties are indeed all the basic properties we need to do most of geometric constructions (such as triangulations, stratifications, retracts, ...). Property $(3)$ is the motivation for introducing subanalytic sets: semianalytic sets are not stable under projection and thus do not fulfill $(3)$.

  Property $(4)$ is a finiteness assumption which makes it possible to derive all the finiteness properties of globally subanalytic sets. The first one and the most important is existence of cell decompositions (see definition below). Most of   the results we give below are not really proper to globally subanalytic sets and are shared by all the sets definable in an o-minimal structure. We will therefore often refer  to \cite{coste} for proofs.

  \begin{dfn}\label{dfn_cell_decomposition}
A {\bf cell decomposition of $\R^n$} is a finite partition of $\R^n$ into globally subanalytic sets
$(C_i)_{i \in I}$, called {\bf cells}, satisfying certain properties explained below.

$n = 1:$ A {\bf cell decomposition of $\R$} is given by a finite subdivision $a_1 < \dots < a_l$ of
$\R$. The cells of $\R$ are the singletons $\{a_i\}$, $0 < i \leq l$, and the intervals $(a_i,
a_{i+1})$,
$0 \leq  i \leq l$, where $a_0 = -\infty$ and $a_{l+1} = +\infty$.

$n > 1:$ A {\bf cell decomposition of $\R^n$} is the data of a cell decomposition of $\R^{n-1}$ and,
for each cell D of $\R^{n-1}$, some globally subanalytic functions  analytic  on $D$ (which is an analytic manifold):
$$\zeta_{D,1} < ... < \zeta_{D,l(D)} : D \to \R.$$

The {\bf cells of $\R^n$} are the {\bf graphs}
$$\{(x, \zeta_{D,i}(x)) : x \in D\} , 0 < i \leq  l(D) ,$$
and the {\bf bands}
$$(\zeta_{D,i}, \zeta_{D,i+1}) := \{(x, y) : x \in D
\mbox{ and }\zeta_{D,i}(x) <y<\zeta_{D,i+1}(x)\},$$
for $0 \leq  i \leq l(D)$, where $\zeta_{D,0} = -\infty$ and $\zeta_{D,l(D)+1} = +\infty$.

A cell decomposition is said to be {\bf compatible with finitely many sets $A_1,\dots,A_k$} if the
$A_i$'s are unions of cells.
\end{dfn}

Given some globally subanalytic sets  $A_1,\dots,A_k$, it is always possible to find a cell decomposition compatible with this family of sets.  Detailed proofs may be found in \cite{coste}.

 This already describes very precisely the geometry of globally subanalytic sets.
 Below, we list some related extra basic properties, useful for us.

{\setlength{\leftmargini}{3pt}
\begin{itemize}
\item {\it (curve selection Lemma)} Let $A\in \St_n$ and  $b\in  cl(A)$. There is an analytic  map $\gamma : (-1, 1) \to  \R^n$
such that $\gamma(0) = b$ and $\gamma((0, 1)) \subset  A$.

\item ({\it subanalycity of the connected components})  Subanalytic sets have only finitely many connected components. They are subanalytic.

\item ({\it uniform bound}) Let  $f : A \to B$ be a globally subanalytic map, with finite fibers. There is $k\in \N$ such that   $card \, f^{-1}(b)\leq k$ for any $b$.
\item ({\it subanalytic choice})  Any globally subanalytic map $f : A \to B$ (not necessarily continuous) admits a globally subanalytic
section, i.e., a globally subanalytic  map $s : B \to  A$ such that $f(s(b))=b $.
\item ({\it subanalycity of the regular locus}) Let $X \in \St_n$. Then $X_{reg}$ is a finite union of analytic manifolds; it is globally subanalytic and dense in $X$.
\end{itemize} }

  For a proof of curve selection Lemma or subanalycity of the regular locus  we refer the reader to \cite{bm}.  A proof of all the other statements may be found in \cite{coste}.

The set $X_{reg}$ is the union of finitely many analytic manifolds. The {\bf dimension of $X$}, denoted $\dim X$, is the  maximal dimension of these manifolds.

\section{Appendix II: Some basic model theoretic principles}
\subsection{Formulae} 
We shall need some basic facts of model theory. We first define what we call  $\la$-formulae. Basically, it is a sequence constituted by  quantifier and some symbols, like for instance $\forall x , \; \exists y, \;\, x \leq 2 yz$. More precisely,  {\bf $\la$-formulae} are defined inductively as follows:
\begin{enumerate}
\item If $f$ is a globally subanalytic function then $ f(x) >0$ and $f(x) =0$ are $\la$-formulae.
\item If $\Phi(x_1,\dots , x_n)$ and $\Psi(x_1,\dots , x_n)$ are  $\la$-formulae, then "$\Phi$  and
$\Psi$", "$\Phi$ or $\Psi$",  and "not  Ω$\Phi$", are $\la$-formulae as well.
\item If $\Phi(y, x)$ is an $\la$-formula, then  $\exists x, \; \Phi(y,x)$
and $\forall x , \; \Phi(y,x)$ are  $\la$-formulae.
\end{enumerate}

  The parameters $x=(x_1,\dots,x_n)$ in $\Phi(x)$ denote the free variables (those which are not quantified).
The symbol $\la$ stands for language:
$\la$-formulae are sentences  "in the language of subanalytic geometry".  Roughly speaking, $\la$-formulae are all the mathematical sentences that one can write using quantifiers, globally subanalytic functions, equalities and inequalities.

As an example, consider  the formula $\Phi(x)$:
 $$\forall \ep >0, \exists \alpha >0,\, \forall y, |x-y| \leq \alpha \Rightarrow |f(x)-f(y)| <\ep.$$
However, the formula $\exists n, \,  n \in \N \mbox{ and }  y =n x$ is {\it not} an $\la$-formula ($\N$ may not be described by an $\la$-formula).  Observe that this formula does not define a subanalytic set of $\R^2$.
 Indeed, the following fundamental result relates $\la$-formulae to subanalytic sets:
\begin{pro}\label{pro_formula}
If  $\Phi(x)$ is an $\la$-formula, then the set $\{x \in \R^n : \Phi(x)\}$ is globally subanalytic.
\end{pro}
To briefly account for this proposition, let us point out that the sentence $\exists y , \, f(x,y)=0$ defines the projection of the set defined by  $f(x,y)=0$. Thus, for this sentence the result follows from Property $(3)$ of section \ref{sect_basic_prop}.  The proposition is then showed by induction on the number of quantifiers (the case of the universal quantifier may be reduced to the existential by considering the negation of the sentence, for more details see \cite{coste}, Theorem $1.13$).

This proposition shows how important it is to work with a category of sets  which is
stable under projection.
As a consequence of this proposition,  the interior and closure of a globally subanalytic set are globally subanalytic.

Another consequence of this proposition is that if the graph of a function is defined by an $\la$-formula, then this function is  globally subanalytic.  It enables to establish that a function is subanalytic without much work.

For instance, if $\xi:\R^n \times [0,1] \to [0,1]$ is a globally subanalytic function then the function defined by  $\zeta(x):=\inf_{t \in [0,1]}\xi(x,t)$ is globally subanalytic. It is then easy to check that if $A$ denotes a subanalytic set then the function $x \mapsto d(x,A)$, which assigns to $x$ the Euclidean distance from $x$ to $A$, is globally subanalytic.



\subsection{Field extensions} Consider all the one variable  globally subanalytic functions which are defined in a right-hand-side
 neighborhood of the origin and identify any two of them which coincide in a small right-hand-side neighborhood of the origin.
  It follows from Puiseux Lemma \cite{pawlucki} that this set is indeed the field of real Puiseux series
  $\sum_{i \geq m} a_i T^{\frac{i}{k}}$, $m \in \Z$, $a_i \in \R$, with $\sum_{i\in \N} a_it^i$ convergent for $t$ in a  neighborhood of zero.
   We shall denote this field $k(0_+)$. We may imbed $\R \hookrightarrow \ko$, sending every real number onto the corresponding constant series.

 We can order this field by setting $f\leq g$ in $k(0_+)$ if  $f(t)\leq g(t)$ for $t$ in a right-hand-side neighborhood of the origin. Observe that the indeterminate $T$ is smaller than any positive real number in $\ko$. Consequently, this field is not Archimedean.

We may also define the {\bf Euclidean  norm} on $k(0_+)^n$, by $|x|:=\sum_{i=1}^n x_i^2 \in k(0_+)$. This  gives rise to a topology.
 A good reference for all the results of this section is \cite{coste}.

\subsection*{Extension of sets and functions} As the composite of globally subanalytic mappings is globally subanalytic, any globally subanalytic function $\xi:\R^n \to \R$ may be extended to a function $\xi_{k(0_+)} :k(0_+)^n \to k(0_+)$,  $\xi_{k(0_+)}(x(T)):=\xi(x(T))$, $x(T)\in \ko$.
Any $\la$-formula may then be also "extended" to $k(0_+)$. For instance the formula $\Phi$, $\exists x \in \R^n , f(x)\geq 0$, where $f$ is an analytic function, has the following extension $\Phi_{k(0_+)}$ to $k(0_+)$:
 $$\exists  x \in k(0_+)^n,  \, f_{k(0_+)}(x) \geq 0.$$ In other words, we can extend a formula by extending the functions this formula involves.

It is not very difficult to  check that if two formulae define the same set in $\R^n$ then their respective extensions define the same set in $\ko^n$ (see \cite{coste}). Hence, we may define the {\bf extension of the set}
$A:=   \{x \in \R^n :\Phi(x)\}$ by setting $$A_{k(0_+)}:=\{x \in k(0_+)^n : \Phi_{k(0_+)} (x)\}.$$
In other words, the extension of a set is obtained by regarding the associated equations and inequalities in the field of real analytic Puiseux series. This set is merely the set of germs of Puiseux arcs lying on $A$.  For instance, the extension of the sphere $S^{n-1}$ (generally still denoted $S^{n-1}$) is the set: $$\{x \in \ko^n: \sum_{i=1} ^n x_i^2=1\}.$$



\subsection*{Generic fibers} Let now $A\subset \R \times \R^n$ be a globally subanalytic set. Regarding the first  variable as a parameter, we will consider this set as a family. We define the {\bf generic fiber} of  this family of sets as (recall that $T\in k(0_+)$ stands for the indeterminate):
$$ A_{0_+}:= \{ x \in k(0_+)^n : (T,x) \in A_{k(0_+)} \}.$$
 It is nothing but the set of germs of Puiseux arcs $x(t)$ such that  $(t,x(t))\in A$ for every $t$ positive small enough. Observe that we can also define the {\bf generic fiber of a family of globally subanalytic functions} $f:A \to \R$, which is the function $f_{0_+}:A_{0_+} \to k(0_+)$ which assigns to $x \in A_{k(0_+)}$ the value $f_{k(0_+)}(T,x)$.

We then can define the subanalytic sets (resp. mappings) of $k(0_+)^n$ as the collection of all the generic fibers of subanalytic families of sets (resp. mappings). This constitutes a family of Boolean algebras which enjoys the same properties as $(\St_n)_{n\in \N}$. Indeed, as they satisfy $(1-4)$ of section \ref{sect_basic_prop}, they then satisfy all  the other properties which come down from these properties.

As a matter of fact, Proposition \ref{pro_formula} is still true if we replace $\R$ with $\ko$. For example, the function $x \mapsto d(x,A)$ is well defined and globally subanalytic if so is $A\subset \ko^n$.


 We can also define {\bf the generic fiber of a formula}. If $\Phi(t,x)$ is a formula, with $x$ and $t$ free variables ($t$ considered as a parameter) we define the generic fiber of $\Phi(t,x)$  as the formula obtained by replacing $t$ with the indeterminate $T$, i.e. we set $\Phi_{0_+}(x):=\Phi_{k(0_+)}(T,x)$. This of course reduces the number of free variables.

\subsection*{Transfer principle}  The study of the generic fiber of a family $A \subset \R \times \R^n$ can provide us information on what happens on the fiber $A_t$ for generic parameter $t$. More precisely, we have the following very important fact. Let $\Phi(t)$ be an $\la$-formula. The formula  $\Phi_{0_+}$ holds true in $k(0_+)$  if and only if $\Phi(t)$ holds for every positive real  number $t$ small enough. This is a consequence of a more general theorem sometimes referred as {\it \L o\'s's Theorem}.

 To make it more concrete, let us illustrate it by some examples. One may easily derive from this fact that if the generic fiber of the family of functions $f:\R \times \R^n \to \R$ is bounded by $1$ then there is $\ep >0$ such that $\sup_{x \in \R^n}f(t,x)$ is not greater that $1$ for any $t \in (0,\ep)$. This is due to the fact that $\forall x ,\, f(t,x) \leq 1$ is an $\la$-formula.

Less easy, but still true, is the fact that if $f_{0_+}$ is continuous then there is $\ep>0$ such that the restriction $f: (0,\ep)\times \R^n \to \R$  is continuous (the non obvious part is the continuity with respect to the parameter $t\in (0,\ep)$ which is not guaranteed  by the above transfer principle, see \cite{coste} section $5.6$ for details). This points out the interplay between the geometry of globally  subanalytic sets of $k(0_+)^{n}$ and the geometry of families of globally subanalytic sets of $\R \times \R^{n}$, for generic parameters.

 To give another illustration, let us focus on the argument of  the proof of Corollary \ref{lem existence proj reg preservant xn+1}. Applying Theorem \ref{thm proj reg hom pres} provides a bi-Lipschitz globally subanalytic  map $H: k(0_+)^{n-1}\to k(0_+)^{n-1}$.  This homeomorphism is the generic fiber of a family of mappings $h:(0,\ep)\times  \R^{n-1} \to (0,\ep)\times  \R^{n-1}$. As the desired property  (see (\ref{eq_e_n_regulier})) holds for $h_{0_+}$, by \L o\'s's Theorem, it also holds for $h_t$, for $t>0$ small enough (since it may be expressed by an $\la$-formula).

To summarize, we get parameterized versions of theorems just by working with a bigger underlying field. Working with a bigger field often makes no difference as most of the time we simply have to write $k(0_+)$ instead of $\R$.


\begin{thebibliography}{mmm}
\bibitem[BGM]{bgm} J.-P. Brasselet, M. Goresky, R. MacPherson,
Simplicial differential forms with poles,
Amer. J. Math. 113 (1991), no. 6, 1019--1052.
\bibitem[BM]{bm}E. Bierstone and P. Milman, Semianalytic and subanalytic sets, Inst. Hautes  Etudes Sci. Publ. Math. No. 67 (1988),
5--42.
\bibitem[BT]{bl}R. Bott,  Tu W. Loring, Differential forms in algebraic topology, Graduate Texts in Mathematics, 82. Springer-Verlag, New York-Berlin, 1982.
\bibitem[Ch1]{c1} J. Cheeger, On the spectral geometry of spaces with cone-like singularities,  Proc. Nat. Acad. Sci. U.S.A.  76  (1979), no. 5, 2103--2106.
\bibitem[Ch2]{c2}J.  Cheeger, On the Hodge theory of Riemannian
   pseudomanifolds, Geometry of the Laplace operator (Proc. Sympos. Pure Math.,
   Univ. Hawaii, Honolulu, Hawaii, 1979), pp. 91--146, Proc. Sympos. Pure
   Math., XXXVI, Amer. Math. Soc., Providence, R.I., 1980.
 \bibitem[Ch3]{c3}J. Cheeger,  Hodge theory of complex cones,  Analysis and topology on singular spaces, II, III (Luminy, 1981),  118--134, Ast\'erisque, 101-102, Soc. Math. France, Paris, 1983.
\bibitem[Ch4]{c4} J.  Cheeger, Spectral geometry of singular Riemannian spaces,
J. Differential Geom. (1983), 575--657 (1984).
\bibitem[CGM]{cgm}J. Cheeger, M. Goresky, R. MacPherson,  $L\sp{2}$-cohomology and intersection homology of singular algebraic varieties, Seminar in Differential Geometry, S. T. Yau,
ed., Princeton University Press, Princeton N. J., 1981.
\bibitem[Co]{coste}M. Coste, An introduction to O-minimal Geometry,
 Dip. Mat. Univ. Pisa,
 Dottorato di Ricerca
 in Matematica, Istituti Editoriali e Poligrafici Internazionali,
 Pisa (2000).
\bibitem[vDS]{ds} L. van den Dries, P. Speissegger,
  O-minimal preparation theorems,  Model theory and applications,  87--116, Quad. Mat., 11, Aracne, Rome, 2002.
\bibitem[vDM]{driesmiller}  L.  van den Dries, C. Miller,
Geometric categories and o-minimal structures,
Duke Math. J. 84 (1996), no. 2, 497--540.
\bibitem[GM1]{gm}M. Goresky, R.   MacPherson, Intersection homology theory,  Topology  19  (1980), no. 2, 135--162.
\bibitem[GM2]{gm2}M. Goresky, R.   MacPherson,  Intersection
   homology II, Invent. Math. 72 (1983), no. 1, 77--129.
\bibitem[HP]{hp} W. C. Hsiang, V. Pati,
$L\sp 2$-cohomology of normal algebraic surfaces, I.
Invent. Math. 81 (1985), no. 3, 395--412.
\bibitem[KP]{kp}K. Kurdyka,  A.  Parusi\'nski, Quasi-convex decomposition in o-minimal structures. Application to the gradient conjecture, Singularity theory and its applications, 137--177, Adv. Stud. Pure Math., 43, Math. Soc. Japan, Tokyo, 2006.
\bibitem[P]{p}A. Parusi\'nski, Lipschitz stratification of subanalytic sets,
 Ann. Sci. Ecole Norm. Sup. (4) 27 (1994), no. 6, 661--696.
\bibitem[Pa]{pawluckistokes} W. Paw\l ucki, Quasiregular boundary and Stokes's formula for a subanalytic leaf, Seminar on deformations (\L \'od\'z/Warsaw, 1982/84), 235--252, Lecture Notes in Math., 1165, Springer, Berlin, 1985.
\bibitem[Pa]{pawlucki} W. Paw\l ucki, Le th\'eor\`eme de Puiseux pour une application sous-analytique, Bull. Polish Acad. Sci. Math. 32 (1984), no. 9-10, 555--560.
\bibitem[L]{l}S. \L ojasiewicz,
Th\'eor\`eme de Paw\l ucki. La formule de Stokes sous-analytique,  Geometry Seminars, 1988--1991  (Bologna, 1988--1991), 79--82, Univ. Stud. Bologna, Bologna, 1991.
\bibitem[LR]{lr} J.-M.  Lion, J.-P.Rolin,  Th\'eor\`eme de pr\'eparation pour les fonctions logarithmico-exponentielles, Ann. Inst. Fourier (Grenoble)  47  (1997),  no. 3, 859--884.
\bibitem[S1]{s1}L. Saper,  $L_ 2$-cohomology and intersection homology of certain algebraic varieties with isolated singularities,  Invent. Math.  82  (1985),  no. 2, 207--255.
\bibitem[S2]{s2}L. Saper,
$L\sb 2$-cohomology of K\"ahler varieties with isolated singularities,
J. Differential Geom. 36 (1992), no. 1, 89--161.
\bibitem[SV]{sv} L. Shartser, G. Valette, De Rham theorem for $L^\infty$ cohomology on singular spaces, preprint.
\bibitem[V1]{v1} G. Valette, Lipschitz triangulations, Illinois J. Math. 49 (2005), issue 3,
953--979.\bibitem[V2]{v2}G. Valette,  On metric types that are definable in an o-minimal structure,  J. Symbolic Logic 73 (2008), no. 2, 439--447. \bibitem[W]{w}F. Warner, Foundations of differentiable manifolds and Lie groups, Corrected reprint of the 1971 edition. Graduate Texts in Mathematics, 94. Springer-Verlag, New York-Berlin, 1983.
\bibitem[Wh]{wh} H. Whitney,  Geometric integration theory, Princeton University Press, Princeton, N. J., 1957. \bibitem[Y]{y} B. Youssin,  $L\sp p$ cohomology of cones and horns,  J. Differential Geom.  39  (1994),  no. 3, 559--603.
\end{thebibliography}
\end{document}